\newtheorem{theorem}{Theorem}[section]
\newtheorem{lemma}[theorem]{Lemma}
\newtheorem{corollary}[theorem]{Corollary}
\newtheorem{proposition}[theorem]{Proposition}
\theoremstyle{definition}
\theoremstyle{remark}
\newtheorem{remark}{Remark}
\title{Atkin--Lehner Decompositions for Quaternionic modular forms}
\author{siddharth ramakrishnan cherukara }
\begin{document}

\maketitle
\section{\textbf{Introduction}}
Let $S_{k}(N)$ denote the space of weight $k$, level $N$ cusp forms. 
 The subspace of $S_{k}(N)$ spanned by modular forms coming from $S_{k}(M)$ for all $M\mid N$, $M\neq N$ is denoted by $S_{k}^{old}(N).$ Its elements are called \emph{oldforms of level $N$}. The subspace orthogonal to $S_{k}^{old}(N)$ with respect to the \emph{Petersson inner product} is called the \emph{newspace of level $N$}, and denoted by $S_{k}^{new}(N).$ Atkin--Lehner in \cite{MR268123} and Li in \cite{MR369263} showed that  \begin{enumerate}
     
     \item $S_{k}^{new}(N)($resp.$S^{old}_{k}(N))$ has an eigenbasis  for $T_{m}$, for all $m$ $($resp.$(m,N)=1)$, where $\{T_{m}\}_{m\geq1}$ denote the family of Hecke operators.
     \item Given an $N$,  how $S^{old}_{k}(N)$ decomposes as a direct sum of subspaces generaed by $S^{new}_{k}(M)$ for $M\mid N$ and  $M\neq N$. 
    
 \end{enumerate} The authors in \cite{MR1102225} develop a similar result for Hilbert modular forms of a fixed level. By the work of Jacquet--Langlands, these classical spaces correspond to spaces of cuspidal automorphic representations of $GL(2)$ over adeles. Using this interpretation, Casselman in \cite{MR337789} described how the results in \cite{MR268123} can be viewed as a purely representation-theoretic result. 
 
 Let $F_{\mathfrak{p}}$ be a local field and $\mathfrak{o}_{F}$ be its ring of integers. Let $\pi_{\mathfrak{p}}$ be an irreducible, admissible representation of $GL(2,F_{\mathfrak{p}})$ with trivial central character. Casselman considered the following family of compact open subgroups $$\Gamma_{0}(\mathfrak{b})=\{\begin{bmatrix}
a & b \\
c & d
\end{bmatrix}:c\equiv 0 (\bmod \mathfrak{b})\} \subset GL(2,\mathfrak{o}_{F})$$for any ideal $\mathfrak{b}\subset\mathfrak{o}_{F}$.  He studied $\pi^{\Gamma_{0}(\mathfrak{b})}$ and proved that, in fact, there exists some $\mathfrak{b}$ such that this space is non-zero and, for the largest such ideal, this space is in fact one-dimensional. We denote this ideal by $\mathfrak{p}^{c(\pi)}$ and call $c(\pi)$ \emph{the conductor of $\pi$}. Casselman also computed the dimensions of $\mathfrak{p}^m$, where $m>c(\pi)$. Using these results, Casselman reinterpreted results in \cite{MR268123}. 

We aim to do an analogous process in the case of quaternionic modular forms, continuing the work in \cite{MR4127854}. Let $F$ be a totally real field, $\mathfrak{o}$ its ring of integers, and $B$ be a definite quaternion division algebra over $F$ of discriminant $\mathfrak{D}_{B}$. Let $\mathcal{O}$ be an order in $B$. We can consider the space $M_{\textbf{k}}(\mathcal{O})$ of quaternionic modular forms of level $\mathcal{O}$ and weight $\textbf{k}$,  whose technical definition we defer to later. Analogous to the case of classical modular forms, they are automorphic forms on $B^{\times}(\mathbb{A})$, where $\mathcal{O}$ plays a similar role to the congruence subgroup of level $N$ in the classical case. For each ideal $\mathfrak{n}\subset\mathfrak{o}$, we can consider a double coset operator $T_{\mathfrak{n}}(\mathcal{O})$ on $M_{\textbf{k}}(\mathcal{O})$, which play the role of the Hecke operators similar to the case of classical modular forms. These are given by \emph{Brandt matrices}, introduced by Brandt in \cite{Brandt1943}. We refer the reader to \cite{MR4127854} for a detailed discussion on them. Following \emph{op.cit.} we denote by $\mathcal{H}^{S}(\mathcal{O})$  the unramified Hecke algebra.

Let $\mathfrak{p}$ be a finite prime of $F$. Similar to the work of Casselman, we aim to use representation theoretic results  of $B_{\mathfrak{p}}^{\times}$, to study the decomposition of $M_{\textbf{k}}(\mathcal{O})$. When $\mathfrak{p}$ does not divide $\mathfrak{D}_{B}$, $B_{\mathfrak{p}}^{\times}\cong GL(2,F_{\mathfrak{p}})$ and hence we can apply the results of Casselman. Therefore we are interested in the case where $\mathfrak{p}\mid\mathfrak{D}_{B}$. A difference from the work of Casselman is that here we have to consider two types of families of compact-open subgroups of the multiplicative group of local division algebra, instead of one. We describe these families briefly here. Let $K_{\mathfrak{p}}$ be a quadratic extension of $F_{\mathfrak{p}}$, which is embedded into $B_{\mathfrak{p}}$.  The compact-open subgroups we consider are  $\mathfrak{o}_{K_{\mathfrak{p}}}^{\times}\mathcal{U}^n$, where  $\mathfrak{o}_{K_{\mathfrak{p}}}$ is the ring of integers of $K_{\mathfrak{p}}$. Let $\pi_{\mathfrak{p}}$ be a representation of $B^{\times}_{\mathfrak{p}}$. The restriction problem we are interested in is studying $\pi_{\mathfrak{p}}^{\mathfrak{o}^{\times}_{K_{\mathfrak{p}}}}$, which behaves differently depending on whether $K_{\mathfrak{p}}$ is ramified or not. Our aim is to address the cases which are unsolved in \cite{MR4127854}. 

Here we give a brief comparison with what has been already done in \cite{MR4127854}. Let $\pi_{\mathfrak{p}},\mathfrak{o}^{\times}_{K_{\mathfrak{p}}}$ be as above, and we wish to study $\pi_{\mathfrak{p}}^{\mathfrak{o}^{\times}_{K_{\mathfrak{p}}}}$. The author in \cite{MR4127854} calculated the dimensions of the invariant spaces, in all cases except when $\pi_{\mathfrak{p}}$ has  odd conductor and $K_{\mathfrak{p}}$ is  ramified. We calculate dimensions of that case under the assumption that $\mathfrak{p}$ is not dyadic. Tunnell in \cite{MR721997} and Saito in \cite{MR1199206} prove how these dimensions can be studied via the epsilon factors associated to these representations.  This completes the restriction problem in the case of odd primes and we obtain applications to theory of both quaternionic and classical modular forms, some of which are noted below. This allows us to completely describe the newspace of quaternionic modular forms of odd level and the whole space of quaternionic modular forms of even level.

\subsection{{\textbf{Main Results}}}

Here we state the main results of the paper. As we mentioned before, this paper is divided into two parts: the local results and the corresponding global results. 
Here, we state the local result in full generality, however we only state our global results for prime power level over $\mathbb{Q}$, in order to minimize notation. 

Let $F$ be a totally real field, and let $\mathfrak{p}$ be a prime in $F$,lying over an odd prime $p$ in $\mathbb{Z}$. Let $q$ denote the cardinality of the residue field of $F_{\mathfrak{p}}$. Let $K_{\mathfrak{p}}$, $L_{\mathfrak{p}}$ denote its two non-isomorphic ramified quadratic extensions. Let $B$ be a definite quaternionic division algebra over $F$ with discriminant $\mathfrak{D}_{B}$, such that
$\mathfrak{p}|\mathfrak{D}_{B}$. Let $\mathcal{O}_{B}$ denote the maximal order in $B$, and $\mathfrak{P}$ its maximal ideal.
In what follows, we denote by $E_{\mathfrak{p}}$ a ramified extension of $F_{\mathfrak{p}}$ (i.e isomorphic to one of $K_{\mathfrak{p}}$ or $L_{\mathfrak{p}}$. Let $\pi_{\mathfrak{p}}$ denote a representation of $B^{\times}_{\mathfrak{p}}$, with $\dim\pi_{\mathfrak{p}}>1$, $c(\pi_{\mathfrak{p}})$ is odd. Then $\pi_{\mathfrak{p}}$ is dihedrally induced from a ramified quadratic extension of $F_{\mathfrak{p}}$. We refer the reader to Section 2.1.2 for a basic discussion on the representation theory of $B^{\times}_{\mathfrak{p}}$ and \cite{MR2234120} for a detailed discussion.
\begin{theorem}
Let $\pi_{\mathfrak{p}}$ be an irreducible smooth representation of $B^{\times}_{\mathfrak{p}}$ of odd conductor and trivial central character, dihedrally induced from $K_{\mathfrak{p}}$.     \begin{enumerate}
    \item $\dim \pi_{\mathfrak{p}}^{\mathfrak{o}^{\times}_{E_{\mathfrak{p}}}}=2$  if\begin{enumerate}
        \item 
    $E_{\mathfrak{p}}\cong K_{\mathfrak{p}}$ and  $q\equiv 3 (\bmod4)$ 
    \item  $E_{\mathfrak{p}}\cong L_{\mathfrak{p}}$ and $q\equiv 1 (\bmod4)$ 
   \end{enumerate} 
\item $\dim \pi_{\mathfrak{p}}^{\mathfrak{o}^{\times}_{E_{\mathfrak{p}}}}=0$  if \begin{enumerate}
        \item 
    $E_{\mathfrak{p}}\cong K_{\mathfrak{p}}$ and $q\equiv 1 (\bmod4)$ 
    \item  $E_{\mathfrak{p}}\cong L_{\mathfrak{p}}$ and $ q\equiv 3 (\bmod4)$ 
   \end{enumerate} 
 \end{enumerate}
\end{theorem}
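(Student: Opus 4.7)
The plan is to realize $\pi_{\mathfrak{p}}$ as a compactly induced representation and then apply Mackey's restriction formula, reducing the problem to a finite character-triviality count that is governed by whether $-1$ is a square in the residue field.

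First, because $c(\pi_{\mathfrak{p}})$ is odd and $\pi_{\mathfrak{p}}$ is dihedrally induced from $K_{\mathfrak{p}}$, the representation theory of $B^{\times}_{\mathfrak{p}}$ recalled in \cite{MR2234120} gives a presentation $\pi_{\mathfrak{p}} \cong \mathrm{ind}_{J}^{B^{\times}_{\mathfrak{p}}} \tilde{\chi}$, where $J = K_{\mathfrak{p}}^{\times}\,\mathcal{U}^{n}$ for a specific $n$ determined by the conductor, and $\tilde{\chi}$ extends a character $\chi$ of $K_{\mathfrak{p}}^{\times}$. The odd-conductor hypothesis is exactly what forces $J$ to have this clean product form with a principal congruence subgroup, which is what makes the subsequent Mackey analysis tractable.

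Next, Mackey's formula yields
\begin{equation*}
\dim \pi_{\mathfrak{p}}^{\mathfrak{o}_{E_{\mathfrak{p}}}^{\times}} = \#\left\{\, g \in J \backslash B^{\times}_{\mathfrak{p}} / \mathfrak{o}_{E_{\mathfrak{p}}}^{\times} \;:\; \tilde{\chi}^{g}|_{g^{-1}Jg \,\cap\, \mathfrak{o}_{E_{\mathfrak{p}}}^{\times}} \equiv 1 \,\right\}.
\end{equation*}
To enumerate the double cosets I would use that $K_{\mathfrak{p}}^{\times}$ surjects onto $B^{\times}_{\mathfrak{p}}/F^{\times}_{\mathfrak{p}}\mathcal{O}_{B}^{\times}$, since $K_{\mathfrak{p}}/F_{\mathfrak{p}}$ is ramified, reducing the count to a finite quotient involving only $\mathcal{O}_{B}^{\times}$. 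Combined with the decomposition $B_{\mathfrak{p}} = K_{\mathfrak{p}} \oplus K_{\mathfrak{p}}\Pi$, where $\Pi$ implements the nontrivial Galois automorphism of $K_{\mathfrak{p}}/F_{\mathfrak{p}}$, this makes the intersections $g^{-1}Jg \cap \mathfrak{o}_{E_{\mathfrak{p}}}^{\times}$ explicit in terms of the pro-$\mathfrak{p}$ filtration $1 + \mathfrak{P}^{j}$. The triviality check on each intersection then reduces, via local class field theory, to asking whether specific units of $F^{\times}_{\mathfrak{p}}$ lie in the norm group $N_{K_{\mathfrak{p}}/F_{\mathfrak{p}}}(K_{\mathfrak{p}}^{\times})$.

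The main obstacle, and the source of the stated $q \bmod 4$ dichotomy, is this last norm test. For non-dyadic $\mathfrak{p}$, $-1$ is a square in the residue field $\mathbb{F}_{q}$ precisely when $q \equiv 1 \pmod 4$, and the two ramified extensions $K_{\mathfrak{p}}, L_{\mathfrak{p}}$ have complementary norm groups inside $F^{\times}_{\mathfrak{p}}$. Depending on whether $E_{\mathfrak{p}} \cong K_{\mathfrak{p}}$ or $E_{\mathfrak{p}} \cong L_{\mathfrak{p}}$, the unit whose norm status controls the triviality of $\tilde{\chi}^{g}$ is $1$ or $-1$ up to squares, which is exactly why the congruences $q \equiv 1, 3 \pmod 4$ play opposite roles between cases $(1)$ and $(2)$. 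I expect a direct count to produce exactly two contributing double cosets in the good cases, giving dimension $2$, while in the bad cases the character is nontrivial on every nonempty intersection, yielding dimension $0$.
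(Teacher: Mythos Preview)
Your proposal is plausible but takes a genuinely different route from the paper. The paper does \emph{not} use Mackey theory at all: instead it transfers the problem to $GL_2$ via the Waldspurger dichotomy and then invokes Tunnell's multiplicity formula in terms of local root numbers. The entire argument reduces to computing the ratio $\varepsilon(\sigma\otimes\omega_{E/F},\psi)/\varepsilon(\sigma,\psi)$ for $\sigma$ the associated Weil--Deligne representation; using inductivity of $\varepsilon$-factors and the elementary fact that $N_{K/F}(\mathfrak{o}_K^\times)$ equals the squares in $\mathfrak{o}_F^\times$ (their Lemma 3.1), this ratio collapses to $\omega_{E/F}(-1)$, which is where the $q\bmod 4$ dichotomy enters. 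Your Mackey approach is in principle valid and more self-contained, since it avoids appealing to Tunnell--Saito, but the actual double-coset enumeration you sketch is where all the difficulty hides: you would need to explicitly parametrize $J\backslash B^\times_{\mathfrak{p}}/\mathfrak{o}_{E_{\mathfrak{p}}}^\times$ and check character triviality on each intersection, and your proposal does not indicate concretely how the distinction $E_{\mathfrak{p}}\cong K_{\mathfrak{p}}$ versus $E_{\mathfrak{p}}\cong L_{\mathfrak{p}}$ manifests in that count or why exactly two cosets survive. The paper's route trades that explicit bookkeeping for one epsilon-factor identity, which is why it is shorter.
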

Now for the global results, let us look at the case where $F=\mathbb{Q}$, and hence we write $p$ instead of $\mathfrak{p}$. We will also specialize to the case where $\mathfrak{D}_{B}=p$.
 Let $\mathcal{O}$ be a \emph{Bass order} in $B$ of level $p^r$, which means that \[\mathcal{O}\otimes \mathbb{Q}_{{v}}= \mathcal{O}_{v}=\begin{cases}
     \mathfrak{o}_{G_{p}}+\mathfrak{P}^{r-1} & v=p,\\
     M_{2}(\mathbb{Z}_{v})&v\neq p,\infty,\\
     
 \end{cases} 
 \] where $G_{p}$ is a quadratic extension of $F_{p}$.  We refer the author to \cite[Chapter~24]{MR4279905} for a comprehensive discussion of such orders. In the rest of the section, we fix such a $K_{p}$, and write $\mathcal{O}={O_{r}}(K_{p})$.  Also, let \[   
{E_{{p}}} \cong
     \begin{cases}
      K_{p} &   p  \equiv 3 (\bmod 4), \\
    L_{p} &  p  \equiv 1 (\bmod 4). \\ 
     \end{cases}
\]
 Let $S_{k}(\mathcal{O})$ denote space of  weight $k$, level $\mathcal{O}$  quaternionic cusp forms and let $S_{k}(N)$ denote the space of weight $k$, level $N$ cusp forms. Let $S_{k}^{new}(\mathcal{O})$ be the subspace of $S_{k}(\mathcal{O})$, which are new at the level. Let $S_{k}^{new-sc}(N)$ (resp. $S_{k}^{new,E_{p}}(N)$) denote the subspaces of $S_{k}^{new}(N)$, spanned by eigenforms with local representation $\pi_{p}$ is supercuspidal (resp. supercuspidal and dihedrally induced from $E_{p}$).

 \begin{theorem}
        \label{thm:main_result}
    Suppose level$(\mathcal{O})=p^{2r+1}$ with $r\geq 1$. 
As $\mathcal{H}^{S}$-modules we have isomorphisms
\begin{equation}
    S_{{k}}^{new}(\mathcal{O})\cong 2S^{new,{E}_{p}}_{{k+2}}(p^{2r+1}),\\ 
\end{equation}
\begin{equation}
S_{k}(\mathcal{O})\cong S_{{k+2}}^{odd}(\mathcal{O}) \bigoplus S_{{k+2}}^{even}(\mathcal{O}),\\
\end{equation}
 \textit{where} 
\begin{equation}
 S_{{k+2}}^{even}(\mathcal{O}) \cong 
    2S^{new-sc}_{{k+2}}(p^{2r})\oplus 2S^{new-sc}_{{k+2}}(p^{2r-2}) \oplus....\oplus 2S^{new-sc}_{{k+2}}(p^{4
    })\oplus 2S^{new-sc}_{{k+2}}(p^{2}),
\end{equation}
\textit{and} 
\begin{equation}
S_{{k+2}}^{odd}(\mathcal{O}) \cong 
    2S^{new,{E}_{p}}_{{k+2}}(p^{2r+1})\oplus 2S^{new,{E}_{p}}_{{k+2}}(p^{2r-1}) \oplus.... \oplus 2S^{new,{E}_{p}}_{{k+2}}(p^{3
    })\oplus S^{new}_{{k+2}}(p).\\
\end{equation}
\end{theorem}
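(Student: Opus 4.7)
The plan is to combine the Jacquet--Langlands correspondence with strong approximation for $B^\times$, and read off the decomposition from the local invariant dimensions at $p$: using Theorem~1.1 for the odd-conductor cases and the corresponding computations of \cite{MR4127854} for the even-conductor and conductor-$1$ cases.

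First I would invoke strong approximation and multiplicity one to write
\[
  S_k(\mathcal{O}) \;\cong\; \bigoplus_{\pi^B} \dim\bigl(\pi^B_f\bigr)^{\widehat{\mathcal{O}}^\times}\cdot\pi^B,
\]
where $\pi^B$ runs over the irreducible automorphic representations of $B^\times(\mathbb{A})$ of weight $k$ at infinity with trivial central character. Because $\mathcal{O}_v$ is maximal at every finite $v\neq p$, the multiplicity collapses to the single local factor $\dim(\pi^B_p)^{\mathcal{O}_p^\times}$. Jacquet--Langlands then identifies this sum, Hecke-equivariantly, with a sum over classical cuspidal automorphic representations $\pi$ of $GL_2(\mathbb{A}_{\mathbb{Q}})$ of weight $k+2$, unramified away from $p$ and with $\pi_p$ a discrete-series representation. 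Partitioning by the local conductor $c:=c(\pi_p)\in\{1,\ldots,2r+1\}$ separates the newforms ($c=2r+1$) from the old pieces ($c<2r+1$).

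For the newspace $c=2r+1$, $\pi_p$ is supercuspidal of odd conductor, hence dihedrally induced from a ramified quadratic extension. Theorem~1.1 together with the definition of $E_p$ gives $\dim(\pi^B_p)^{\mathfrak{o}_{K_p}^\times}=2$ when $\pi_p$ is dihedrally induced from $E_p$ and $0$ otherwise, which yields $S_k^{new}(\mathcal{O}) \cong 2\,S_{k+2}^{new,E_p}(p^{2r+1})$. The old-space pieces break down as follows. For odd $c=2s+1$ with $s\geq 1$, the same dichotomy persists under the deeper invariants $\pi_p^{\mathfrak{o}_{K_p}^\times\mathcal{U}^{r-1}}$, producing $2\,S_{k+2}^{new,E_p}(p^{2s+1})$; for $c=1$, the Jacquet--Langlands partner of a weight-$(k+2)$ newform of level $p$ is a one-dimensional character of $B^\times_p$ (either trivial or the unramified quadratic twist composed with the reduced norm), each contributing invariants of dimension $1$ and producing a single copy of $S_{k+2}^{new}(p)$. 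For even $c=2s$ with $1\leq s\leq r$, the invariant-dimension computations in \cite{MR4127854} contribute $2\,S_{k+2}^{new-sc}(p^{2s})$. Gathering these contributions by the parity of $c$ gives the decomposition into $S_{k+2}^{odd}(\mathcal{O})$ and $S_{k+2}^{even}(\mathcal{O})$ as stated.

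The main obstacle I anticipate is the extension of Theorem~1.1 from $\mathfrak{o}_{E_p}^\times$-invariants to $\mathfrak{o}_{K_p}^\times\mathcal{U}^{r-1}$-invariants required for the old-space odd-conductor contributions when $r\geq 2$. One must verify that as the $\mathcal{U}^n$-filtration is deepened, each admissible odd conductor $2s+1$ still contributes exactly two dimensions of invariants for dihedral-from-$E_p$ representations and none otherwise. This is most naturally handled by an inductive local argument on the $\mathcal{U}^n$-exponent, applying the dihedral-induction techniques of Theorem~1.1 layer by layer; the bookkeeping across conductors and types is delicate and should constitute the technical core of the full proof.
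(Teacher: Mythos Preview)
Your plan is essentially the paper's own argument: decompose $S_k(\mathcal{O})$ as $\bigoplus_\pi \pi^{\widehat{\mathcal{O}}^\times}$, transfer via Jacquet--Langlands, and read off multiplicities from the local invariant dimensions at $p$ using Theorem~1.1 together with the even-conductor and dimension-one results of \cite{MR4127854}. The paper organizes the bookkeeping through the old/new formalism (its Propositions~4.1--4.2, quoted from \cite{MR4127854}) and then applies its Theorem~4.4 level by level, but the substance is the same.

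The one place your write-up goes astray is the ``main obstacle'' you flag. For the old-space odd-conductor contributions there is in fact \emph{no} obstacle: if $c(\pi_p)=c\le 2r$ and $\mathcal{O}_p^\times=\mathfrak{o}_{K_p}^\times\mathcal{U}^{2r}$, then $\mathcal{U}^{2r}\subset\mathcal{U}^{c}$ acts trivially on $\pi_p$ by the very definition of conductor, so
\[
\pi_p^{\mathcal{O}_p^\times}=\pi_p^{\mathfrak{o}_{K_p}^\times\mathcal{U}^{2r}}=\pi_p^{\mathfrak{o}_{K_p}^\times}
\]
immediately, and Theorem~1.1 applies without any ``layer by layer'' induction. (Also note the filtration index: for level $p^{2r+1}$ one has $\mathcal{O}_p^\times=\mathfrak{o}_{K_p}^\times\mathcal{U}^{2r}$, not $\mathcal{U}^{r-1}$.) The only place where $\mathcal{U}^{m-1}$ fails to act trivially is the newspace itself ($c=2r+1$, $m-1=2r$); the paper handles that case by invoking the newform framework of \cite{MR4127854}, which establishes that for $c(\pi_p)=c$ the invariants under $\mathcal{O}_c(E)^\times$ already agree with the $\mathfrak{o}_E^\times$-invariants. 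So your proof would go through once you replace the anticipated inductive argument by this one-line observation for the old pieces and cite \cite{MR4127854} for the newspace identification.
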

\begin{theorem}
Suppose level$(\mathcal{O})=p^{2r}$ with $r> 1$.
As $\mathcal{H}^{S}$-modules we have isomorphisms
\begin{equation}
S_{{k}}^{new}(\mathcal{O})\cong 2S^{new-sc}_{{k+2}}(p^{2r}),
\end{equation}
and
\begin{equation}
S_{{k}}(\mathcal{O})\cong S_{{k+2}}^{odd}(\mathcal{O}) \bigoplus S_{{k+2}}^{even}(\mathcal{O}) ,\end{equation}
   where
\begin{equation}
S_{{k+2}}^{even}(\mathcal{O}) \cong 
    2S^{new-sc}_{{k+2}}(p^{2r})\oplus 2S^{new-sc}_{{k+2}}(p^{2r-2}) \oplus....\oplus 2S^{new-sc}_{{k+2}}(p^{2}) \oplus S^{new-sp}_{{k+2}}(p^2),
\end{equation} and 
\begin{equation}
S_{{k+2}}^{odd}(\mathcal{O}) \cong 
    2S^{new,{E}_{p}}_{{k+2}}(p^{2r-1})\oplus 2S^{new,{E}_{p}}_{{k+2}}(p^{2r-3}) \oplus.... \oplus 2S^{new,{E}_{p}}_{{k+2}}(p^{3
    })\oplus S^{new}_{{k+2}}(p).
\end{equation}
\end{theorem}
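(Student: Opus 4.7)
The plan is to follow the strategy of Theorem~1.3 (the odd-level case) and adapt it to even level, with the main difference being which local cases at $p$ need to be invoked. First, by Jacquet--Langlands there is an $\mathcal{H}^S$-equivariant isomorphism
\[
S_k(\mathcal{O})\;\cong\;\bigoplus_\pi \dim\!\bigl(\pi^{\mathcal{O}^\times}\bigr)\,\pi^{JL},
\]
with $\pi$ running over automorphic representations of $B^\times(\mathbb{A})$ with trivial central character and archimedean type matching $\mathbf{k}$, and with $\pi^{JL}$ the Jacquet--Langlands transfer to $GL_2(\mathbb{A})$. The multiplicity factorizes as $\prod_v \dim \pi_v^{\mathcal{O}_v^\times}$. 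At $v \neq p$, $\mathcal{O}_v^\times = GL_2(\mathbb{Z}_v)$ and Casselman's theorem gives $\dim \pi_v^{\mathcal{O}_v^\times} = 1$ iff $\pi_v$ is unramified; consequently the conductor of $\pi^{JL}$ is a power of $p$, determined entirely by $c(\pi_p)$.

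Next, I would carry out the local analysis at $p$. Since the level $p^{2r}$ is even, the Bass data forces $G_p$ to be the unramified quadratic extension of $F_p$, so $\mathcal{O}_p^\times = \mathfrak{o}_{G_p}^\times \mathcal{U}^{2r-1}$ with $G_p$ unramified. This is precisely one of the cases already handled in \cite{MR4127854}; the case left unresolved there involves a ramified defining extension, which is the content of Theorem~1.1 and enters the odd-level theorem, not here. Reading off the dimensions from \emph{op.\ cit.}, one obtains $\dim \pi_p^{\mathcal{O}_p^\times} = 2$ for supercuspidal $\pi_p$ of even conductor $2s$ with $1 \leq s \leq r$; $\dim \pi_p^{\mathcal{O}_p^\times} = 2$ for supercuspidal $\pi_p$ of odd conductor $2s+1$ with $1 \leq s \leq r-1$ dihedrally induced from $E_p$ (and $0$ for the other ramified induction, the choice being controlled by $p \bmod 4$); and $\dim \pi_p^{\mathcal{O}_p^\times} = 1$ for the one-dimensional $\pi_p$ at the boundary, namely the Steinberg of conductor $p$ (unramified character of the reduced norm) and its ramified-quadratic-character twist of conductor $p^2$.

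Finally, I would assemble these local dimensions and match each $\pi_p$-class to a space of classical newforms via Jacquet--Langlands. Summing over the even-conductor supercuspidals together with the conductor-$p^2$ Steinberg twist yields $S_{k+2}^{even}(\mathcal{O})$, with the non-doubled summand $S_{k+2}^{new-sp}(p^2)$ produced by the boundary; summing over the odd-conductor supercuspidals induced from $E_p$ together with the conductor-$p$ Steinberg yields $S_{k+2}^{odd}(\mathcal{O})$, with the non-doubled summand $S_{k+2}^{new}(p)$ produced by the boundary. Restricting to $c(\pi_p) = 2r$ gives the newform statement $S_k^{new}(\mathcal{O}) \cong 2\,S_{k+2}^{new-sc}(p^{2r})$. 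The main technical obstacle is the careful separation of Steinberg and Steinberg-twist contributions from genuine supercuspidals at the boundary conductors $c(\pi_p) \in \{1,2\}$, which is exactly what creates the differing multiplicities ($1$ versus $2$) between the low-conductor and interior summands; everything else reduces to routine bookkeeping.
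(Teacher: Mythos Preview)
Your overall architecture (factorize the multiplicity, compute local invariants at $p$, then reassemble via Jacquet--Langlands) is exactly the paper's, but the key local step is stated backwards. You write that ``the Bass data forces $G_p$ to be the unramified quadratic extension of $F_p$'' because the level $p^{2r}$ is even. In fact the opposite is true: a special order $\mathcal{O}_r(E)=\mathfrak{o}_E+\mathfrak{P}^{r-1}$ with $E/F_p$ unramified can only have \emph{odd} level, whereas both parities occur when $E$ is ramified (see the paper's Section~4 setup and \cite{MR977435}). Hence for level $p^{2r}$ the local order is necessarily $\mathcal{O}_{2r}(K_p)$ with $K_p$ a \emph{ramified} quadratic extension, and $\mathcal{O}_p^\times=\mathfrak{o}_{K_p}^\times\mathcal{U}^{2r-1}$.

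This matters in two places. First, your quoted dimensions do not actually follow from the unramified hypothesis: for even-conductor minimal $\pi_p$ the table gives $\dim\pi_p^{\mathfrak{o}_E^\times}=e(E/F_p)$, which is $1$ when $E$ is unramified, not $2$; the factor $2$ in front of each $S^{new\text{-}sc}_{k+2}(p^{2s})$ comes precisely from $e(K_p/F_p)=2$. Second, for odd-conductor $\pi_p$ the relevant restriction is to $\mathfrak{o}_{K_p}^\times$ with $K_p$ ramified, which is exactly the case \emph{not} covered in \cite{MR4127854} and is the content of Theorem~1.1. So Theorem~1.1 is genuinely needed here to produce the $S^{odd}_{k+2}(\mathcal{O})$ summand (this is why the paper's Remark flags equation~(8) as a new contribution), contrary to your claim that it ``enters the odd-level theorem, not here.'' Once you correct the defining extension to be ramified and invoke Theorem~1.1 for the odd-conductor pieces, the rest of your bookkeeping goes through as written.
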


\begin{remark}
    In the Theorem 1.3 it is also possible that the $\mathcal{O}_{p}=\mathcal{O}_{2r+1}(M_{p})$, where $M_{p}$ is the unique unramified quadratic extension of $\mathbb{Q}_{p}$. This case is covered in \cite{MR4127854}. Also, the case of $r=1$ in Theorem 1.3 is also completely covered in \cite{MR4127854}, where the author additionally considers modular forms with non-trivial nebentypus. In contrast, the decomposition of new space in Theorem 1.2, the description of $S^{odd}_{k+2}(\mathcal{O})$ in Theorem 1.2 and Theorem 1.3 (i.e $(1),(4)$ and $(8)$) are entirely new contributions.
\end{remark}
\subsection{Applications}Here we describe some of the potential applications of our result and motivations behind our work. Our results provide a more complete description of the space of quaternionic modular forms. While quaternionic modular forms are interesting objects in their own right, one of the main reasons they were studied was to understand corresponding spaces of classical modular forms. One famous instance of this idea is the \emph{Basis Problem}. It was originally conjectured by Hecke in \cite{MR3665} that all cusp forms on $\Gamma_{0}(p)$ are linear combinations of theta series attached to norm forms of the definite quaternion division algebra of discriminant $p$. A corrected version of this conjecture was proved by Eichler in \cite{MR80768} and \cite{MR485698}. In \cite{MR579066}, Pizer gives an explicit algorithm to compute the spaces of modular forms based on above theory.  We refer the reader to \cite{MR960090} for a historical overview of the basis problem. In \cite{MR4127854}, the author gives a representation theoretic solution to Eichler's basis problem. For orders of level $p^{2r+1}$,  these references considered the case where they come from the unramified quadratic extension of $\mathbb{Q}_{p}$. But here we are able to consider the situation where they possibly arise from the ramified quadratic extension of $\mathbb{Q}_{p}$( see \ref{thm:main_result}), which is new. Hence we are able to give a more complete solution to Eichler's basis problem. Additionally, quaternionic modular forms are helpful in doing computations on the space of Hilbert modular forms as illustrated in \cite{MR3184337}, and we hope our results would help with some of these computational aspects.

Another application of our result is that we can isolate spaces of elliptic modular forms satisfying certain natural conditions. For simplicity let us consider the case where $N=p^{2r+1}$, for a fixed odd prime $p$ and $r\geq1$, an integer. Let $S^{new,E_{p}}_{k}(p^{2r+1})$ denote the subspace of $S^{new}_{k}(p^{2r+1})$, with local representation $\pi$ coming from dihedrally induced from $E_{p}$, where $E_{p}$ is a quadratic extension of $\mathbb{Q}_{p}$. The extension where the representation is induced is an invariant of the orbit of the representation under the action of the absolute Galois group. We say local representations $\pi_{p},\pi_{p}^\prime$ of $PGL(2,\mathbb{Q}_{p})$ are of the same inertial type if their restriction of $\rho,\rho^\prime$ to $I_{\mathbb{Q}_{p}}$, the inertial subgroup of $W_{\mathbb{Q}_{p}}$ are isomorphic, where $\rho,\rho^\prime$ are associated to $\pi_{p},\pi_{p}^\prime$ by the Local Langlands Correspondence. Now the Galois orbit of a given inertial type of $\pi$ consists of all representations which are induced from the same quadratic extension as $\pi_{p}$ and are having the same conductor when $p\geq5$ and the representation has an odd conductor. Hence in the result on the RHS, we can isolate a subspace of modular forms with local representation at $p$ of with a fixed local Galois type, by describing it as an isomorphic image of a space of quaternionic modular forms of a fixed level $\mathcal{O}$. Now suitably choosing the level $\mathcal{O}$, we can study modular forms with local representation at $p$ of any given type. We refer the reader to \cite{MR4269428} and \cite{KM} for a more complete discussion on Galois orbits of local types.

 In \cite{MR4492517}, the author uses results in \cite{MR4127854} to study the certain central $L$-values of Hilbert modular forms. Also in \cite{MR3762695} and \cite{MR4127854}, the author obtains certain congruence relations for suitable Eisenstein series with cusp forms. We hope our results would help in relaxing some of the assumptions there.

\section*{\textbf{Acknowledgements}}
 I would like to thank my advisor, Kimball Martin, for suggesting this problem and for his invaluable guidance. I am also grateful to him for carefully going through an earlier version of the paper and helping to improve the exposition. I also thank Basudev Pattanayak for helpful discussions. 
\section{\textbf{Preliminaries}}
\subsection{Local Preliminaries}
Let $F$ be a $p$-adic field, with  valuation $v_{F}$, normalised such that $v_{F}(F^{\times})=\mathbb{Z}$. Let $\mathfrak{p}$ denote all elements in $v_F$ with positive valuation. A \emph{quaternion algebra} over $F$ is a central simple algebra over $F$ of dimension 4. By the Hasse-Noether theorem, there are only two possiblities up to isomorphism, that is, the matrix algebra $M_{2}(F)$ and the unique quaternion division algebra, which we denote by $B$. We can extend $v_{F}$ to a valuation  $v$ on $B$, via $v=v_{F}\circ N_{B/F}$. Our main object of study will be  $B^{\times}$. Let 
$$\mathcal{O}_{B}=\{x\in B:v(x) \geq 0 \},$$  
$$\mathfrak{P}=\{x \in B:v(x)>0\},$$
$$\mathcal{O}_{B}^{\times}=\{x \in B:v(x)=0\},\textit{ and}$$
$$\mathcal{U}^{n}=1+\mathfrak{P}^{n},n\geq1.$$
Then $\mathcal{O}_{B}$ is the maximal order in $B$. We also consider other orders, namely special orders which are of the form $$\mathcal{O}_{r}(E)=\mathfrak{o}_{E}+\mathfrak{P}^{r-1}$$ where $r\geq1$, and $E$ is a quadratic extension of $F$ inside $B$. The level of a special order is $\mathfrak{p}^r$ with $r$ minimal such that $\mathcal{O}=\mathcal{O}_{r}(E)$, for some $E$. Hence by our convention $\mathcal{O}_{B}$ has level $\mathfrak{p}$. The details of these orders can be found in \cite{MR977435}. We also have $\mathcal{O}^{\times}_{r}(E)=\mathfrak{o}_{E}^{\times}\mathcal{U}^{r-1}$. The groups $\mathcal{U}^{n}$ are compact, open and normal subgroups of $B^{\times}$.
We aim to study the irreducible, smooth, admissible representations $\pi$ of $B^{\times}$. \emph{Smooth} means that, there exist an integer $n>0$,  $\pi$ is trivial on $\mathcal{U}^{n}$. We call the smallest such $n$ the \emph{conductor} of the representation $\pi$ and denote it as $c(\pi)$. \emph{Admissibile} means that $\pi^{\mathcal{U}_{r}}$ is finite dimensional for all $r>0$. Since $\mathcal{U}^{r}\trianglelefteq B^{\times}$, we see all such representations are finite dimensional.

A representation $\pi$ is called minimal if for any character $\phi:F^{\times} \rightarrow \mathbb{C}^{\times}$, $c(\pi) \leq c(\pi \otimes(\phi\circ N_{B/F}))$. 
Minimal representations of $B^{\times}$ are classified as follows:
\subsubsection{\textbf{One dimensional representations}} Let $\chi:F^{\times} \rightarrow \mathbb{C}^{\times}$ be a character of $F^{\times}$. Now $\mu=\chi \circ N_{B/F}$ is a character of $B^{\times}$. All characters of $B^{\times}$ are of this form.
\subsubsection{\textbf{Higher dimensional representations}}
 Fix an additive character $\psi$ of $F$ of conductor 0.
Let $E$ be a quadratic extension of $F$, let $\mathfrak{o}_{E}$ be the ring of integers of $E$, $\mathfrak{p}_{E}$ be the maximal ideal of $\mathfrak{o}_{E}$. Let $\chi$ be a character $E^{\times}$. The maximal $r >0$ such that $\chi$ is trivial on $1+\mathfrak{p}_{E}^{r}$ is called the \emph{conductor} of $\chi$ and denoted by $f(\chi)$. Let $N_{E/F}, T_{E/F}$ be the norm and trace maps from $E$ to $F$. Let $\chi$ be a minimal character of $E^{\times}$, i.e $f(\chi)\leq f(\chi.(\phi \circ N_{E/F}))$ for all characters $\phi$ of $F^{\times}$. We can divide this further into two cases
\begin{enumerate}
    \item \emph{Odd conductor representations}: Let $c(\pi)=2n+1\geq3$ and let $E$ be a ramified quadratic extension of $F$. Let $J=E^{\times}\mathcal{U}^{n}$. Then $\pi\cong Ind_{J}^{B^{\times}}\Lambda $, where $\Lambda$ is a character of $J$.
    All representations with odd conductor is obtained this way.
    \item \emph{Even conductor representations}: Let $c(\pi)=2n\geq2$ and let $E$ be the unramified quadratic extension of $F$. Let $J=E^{\times}\mathcal{U}^{n-1}$. Then $\pi\cong Ind_{J}^{B^{\times}}\Lambda $, where $\Lambda$ is a character of $J$ if $n$ is odd and a $q$-dimensional representation of J if $n$ is even. All minimal representations with even conductor is obtained this way.
\end{enumerate}
If $\pi$ is non-minimal, then $\pi$ always has even conductor.

\subsection{Global Preliminaries}
We briefly introduce quaternionic modular forms here. For a more detailed exposition, the reader is referred to \cite{MR4127854}.
Let $F$ be a totally real field of degree $d$, and $\mathfrak{o}=\mathfrak{o}_{F}$ be the ring of integers. Let $Sym^{k}$ be the k-th symmetric power of standard representation of $GL_{2}(\mathbb{C})$. Let $B$ be a definite quaternion algebra over $F$ and
let $\mathfrak{D}_{B}$ be the discriminant of $B$. An order $\mathcal{O}$ in $B$ is a subring of $B$, which is a complete $\mathfrak{o}_{F}$-lattice. For each finite place $\mathfrak{p}$ of $\mathfrak{o}_{F}$, let $\mathcal{O}_{\mathfrak{p}}=\mathcal{O}\otimes \mathfrak{o}_{F_{\mathfrak{p}}}$. We say $\mathcal{O}$ is a \emph{special order} of level $\mathfrak{N}$, if each local order $\mathcal{O}_{\mathfrak{p}}$ is of level $\mathfrak{p}_{v}^{n_{v}}$, and $\mathfrak{N=\prod p}_{v}^{n_{v}}$.
Let $\nu_{1}, \nu_{2},......\nu_{d}$ be the real embeddings of $F$. Hence $B_{\nu_{i}} \cong \mathbb{H}$, for all $i$. For such infinite places fix an embedding of $B_{\nu_{i}}^{\times}$ into $GL_{2}(\mathbb{C})$ and for $k_i>0$, compose this embedding with $Sym^{k}\otimes det^{-k/2}$, and call it $\rho_{k_i,\nu_i}$. Let $\textbf{k}=(k_{1},......k_{d})$, we call $\rho_{\textbf{k}}=\bigotimes_{\nu_{i}}\rho_{k_i,{\nu_{i}}}$.
Let 
$\hat{B}^{\times}=\{(x_{v})_{v};v<\infty$; such that $x_{v} \in \mathcal{O}^{\times}_{v}$, for all except finitely many $v\} \subset \prod_{v}B^{\times}_{v}$. Clealy $B^{\times} \hookrightarrow \hat{B}^{\times}$ via the diagonal embedding. Also $\hat{\mathcal{O}}^{\times}= \prod_{v<\infty}\mathcal{O}^{\times}_{v}$. 
 Let $Cl(\mathcal{O})$ denote the set of invertible fractional right ideal classes of $\mathcal{O}$ modulo principal fractional ideals.
 We have 
\begin{equation}
Cl(\mathcal{O})\cong B^{\times}\backslash {\hat{B}}^{\times}/\hat{\mathcal{O}}^{\times}
\end{equation}
 Let $M_{\mathbf{k}}(\mathcal{O})$ denote the space of quaternionic modular forms of weight $\mathbf{k}=(k_{\nu_{1}},......k_{\nu_{d}})$, and level $\mathfrak{N}$, defined as follows
$$M_{\mathbf{k}}(\mathcal{O})=\{\varphi:{B^{\times}}\backslash B^{\times}(\mathbb{A})  \rightarrow\mathbf{V}_{\mathbf{k}}:\varphi(x\alpha g)=\rho_{\textbf{k}}(g^{-1})\varphi(x); \textit{ for all } g \in B_{\infty}^{\times},\alpha \in \hat{\mathcal{O}}\}$$
Whenever $\mathbf{k=(0,0,....,0)}$, using (3) this means 
$$M_{\mathbf{k}}(\mathcal{O})\cong\{\varphi:Cl(\mathcal{O) \rightarrow\mathbb{C}}\}$$ 
Now let us define the cuspforms of level $\mathcal{O}$, denoted by $S_{\mathbf{k}}(\mathcal{O})$. When $\mathbf{k\neq(0,0,0...,0)}$, set $S_{\mathbf{k}}(\mathcal{O})=M_{\mathbf{k}}(\mathcal{O})$. When  $\mathbf{k=(0,0,0,0...,0)}$, we define $E_{\mathbf{k}}(\mathcal{O})$ to be the linear span of $\varphi=\mu \circ N_{B/F}$, for a character $\mu$ of $F^{\times}\backslash \mathbb{A}_{F}^{\times}$, such that $\varphi$ is in $M_{\mathbf{k}}(\mathcal{O})$. We can define an inner product on $M_{\mathbf{k}}(\mathcal{O})$, as $$(\varphi,\varphi^{\prime})= \int_{\mathbb{A}_{F}^{\times}B^{\times}\backslash B^{\times}(\mathbb{A})} \varphi(x) \overline{\varphi^{\prime}(x)} \,dx $$
for a suitably chosen Haar measure $dx$. We define $S_{\mathbf{k}}(\mathcal{O})$ to be the subspace of $M_{\mathbf{k}}(\mathcal{O})$ to be the subspace orthogonal to $E_{\mathbf{k}}(\mathcal{O})$ under this inner product.

\section{\textbf{Local Results}}
Let $F$ be a non-archimedean local field of characteristic $0$, with residue field having characteristic $p$ and cardinality $q$, where $p$ is odd. Denote $K$ and $L$  the ramified quadratic extensions of $F$, and $M$ the unique unramified quadratic extension of $F$.  Let $B$ be the unique quaternion division algebra over $F$. Let $\pi$ be an irreducible smooth representation of $B^{\times}$ with  trivial central character. Let $E$ be a quadratic  extension of $F$, and with normalised valuation $v_{E}$ such that $v_{E}(E^{\times})=\mathbb{Z}$. We call  $\nu_{E}(x)=(-1)^{v_{E}(x)}$ the sign character of $E^{\times}$. We know that $E^{\times}$ embeds into $B^{\times}$. As remarked in the introduction, we want to calculate  $\pi^{\mathfrak{o}^{\times}_{E}}$. This can be understood by fully understanding $\pi_{|E^{\times}}$. Let $\pi^{\prime}$ be the corresponding discrete series representation of $GL(2,F)$ via the Jacquet--Langlands correspondence. We know that the answer to our previous question can be described completely by knowing $\pi^{\prime}_{|E^{\times}}$, since 
\begin{equation}    
\dim( Hom_{E^{\times}}(\pi,\chi))+\dim( Hom_{E^{\times}}(\pi^{\prime},\chi))=1 \end{equation} by Waldspurger's work in \cite{MR1103429}, for any character $\chi:E^{\times}\rightarrow\mathbb{C}^{\times}$.
Let $\sigma$ be the irreducible 2 dimensional Weil--Deligne representation of $W_{F}$ associated to $\pi^{\prime}$ via Local Langlands Correspondence. All such representations are dihedrally induced from a  quadratic extension of $F$. In \cite{MR4127854}, author has completely worked out  $\dim\pi^{E^\times}$, when $\pi$ has even conductor and where $\pi$ has odd conductor and $E$ is unramified. We summarize the results in those cases from \cite{MR4127854} in \ref{table:Table}.

We will prove the remaining case, i.e $\pi$ has odd conductor and $E$ is ramified. So in what is to follow, we will assume that $E$ is a ramified quadratic extension of $F$, and the representation $\pi$ has odd conductor, dihedrally induced from $K$. The corresponding Weil--Deligne representation also arises from the index two subgroup $W_{K}$ of $W_{F}$. To each representation of $W_{F}$ to $\sigma$, we can associate a number called the $\varepsilon(\sigma,\psi)$-factor to $\sigma$, where $\psi$ is an additive character of $F$. We refer the reader to \cite{MR721997} for a basic introduction to $\varepsilon$-factors. Determining how the epsilon factor behaves under twisting completely determines the restriction problem described above. We need the following elementary lemma about norms from ramified quadratic extension. 
\begin{lemma}
    Let $E$ be any ramified quadratic extension and $N:E^{\times} \rightarrow F^{\times}$ denote the norm map. Then $N(\mathfrak{o}_{E}^{\times})$ is the set of squares in $\mathfrak{o}_{F}^{\times}$.
\end{lemma}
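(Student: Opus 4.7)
The plan is to prove both inclusions directly, using the explicit description of ramified quadratic extensions available when the residue characteristic is odd. The inclusion $(\mathfrak{o}_{F}^{\times})^{2}\subseteq N(\mathfrak{o}_{E}^{\times})$ is immediate: any $u\in\mathfrak{o}_{F}^{\times}$, viewed inside $\mathfrak{o}_{E}^{\times}$, is fixed by $\mathrm{Gal}(E/F)$, so $N_{E/F}(u)=u^{2}$. It is the reverse inclusion that requires work.

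Since $p$ is odd, $F^{\times}/(F^{\times})^{2}$ is generated by the classes of a fixed non-square unit $u_{0}$ and a uniformizer $\pi_{F}$, so the two ramified quadratic extensions are $F(\sqrt{d})$ with $d\in\{\pi_{F},\,u_{0}\pi_{F}\}$. Accordingly, I would write $E=F(\sqrt{d})$, giving $\pi_{E}=\sqrt{d}$ as a uniformizer and $\mathfrak{o}_{E}=\mathfrak{o}_{F}[\sqrt{d}]$. For $\alpha=x+y\sqrt{d}\in\mathfrak{o}_{E}^{\times}$, a parity-of-valuation argument, using that $v_{E}(x)=2v_{F}(x)$ is even while $v_{E}(y\sqrt{d})=2v_{F}(y)+1$ is odd, forces $v_{F}(x)=0$ and $y\in\mathfrak{o}_{F}$. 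Hence
\[
N_{E/F}(\alpha)=x^{2}-dy^{2}=x^{2}\bigl(1-d(y/x)^{2}\bigr),
\]
and since $v_{F}(d(y/x)^{2})\geq 1$, the second factor lies in $1+\pi_{F}\mathfrak{o}_{F}$.

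The main step is then Hensel's lemma applied to $T^{2}-(1+\pi_{F}a)$: because $p$ is odd, the derivative $2T$ is a unit at $T=1$, so every principal unit is a square in $\mathfrak{o}_{F}^{\times}$. Combined with the factorization above, this yields $N_{E/F}(\alpha)\in(\mathfrak{o}_{F}^{\times})^{2}$, completing the proof. I do not anticipate any real obstacle beyond the valuation bookkeeping used to force $x\in\mathfrak{o}_{F}^{\times}$; the hypothesis that $p$ is odd enters exactly twice, once to enumerate the ramified quadratic extensions up to isomorphism, and once in the Hensel step. As a sanity check, the conclusion is consistent with local class field theory, under which $[F^{\times}:N_{E/F}(E^{\times})]=2$ and $N_{E/F}(\pi_{E})$ has valuation $1$, forcing $[\mathfrak{o}_{F}^{\times}:N_{E/F}(\mathfrak{o}_{E}^{\times})]=2=[\mathfrak{o}_{F}^{\times}:(\mathfrak{o}_{F}^{\times})^{2}]$.
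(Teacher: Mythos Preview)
Your argument is correct but proceeds along a different path from the paper. The paper invokes local class field theory: writing $\chi$ for the quadratic character of $F^{\times}$ attached to $E$, one has $N(E^{\times})=\ker\chi$, and since $E/F$ is tamely ramified, $\chi|_{\mathfrak{o}_F^{\times}}$ factors through $\mathfrak{o}_F^{\times}\to\mathbb{F}_q^{\times}$, where it must be the unique quadratic character; its kernel is therefore the preimage of $(\mathbb{F}_q^{\times})^2$, which by Hensel coincides with $(\mathfrak{o}_F^{\times})^2$. You instead compute the norm form directly on $\mathfrak{o}_E=\mathfrak{o}_F[\sqrt{d}]$, showing that $N(x+y\sqrt{d})=x^{2}\cdot(\text{principal unit})$ and then applying Hensel to the principal-unit factor. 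Your approach is self-contained and elementary, never appealing to the reciprocity map (the class-field-theoretic content shows up only in your closing sanity check, which is essentially the paper's argument recast as an index count); the paper's approach is shorter and more conceptual. Both arguments use the oddness of $p$ in the same two places: to control the ramification/conductor, and in the Hensel step.
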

    \begin{proof}
        Clearly, the set of squares in $\mathfrak{o}_{F}^{\times}$ is contained in the image of the norm map. Let $\chi:F^{\times} \rightarrow \mathbb{C}^{\times}$ be the character associated with $E$. This means that $ker(\chi)=N(E^{\times})$. Looking at $\chi$ as a character $\mathfrak{o}_{F}^{\times}$, we know it factors through the quotient map $pr:\mathfrak{o}_{F}^{\times} \rightarrow \mathbb{F}_{q}^{\times}$. Therefore $\chi(x)=1 \iff \chi(pr(x))=1$. Since $\mathbb{F}_{q}^{\times}$ has a unique quadratic character, this means $\chi(p(x))=1$ if and only if $p(x)$ is a square in $\mathbb{F}_{q}^{\times}$. By Hensel's lemma, this implies $x$ is a square in $\mathfrak{o}_{F}^{\times}$, since $p$ is odd.
    \end{proof}
    Let $\lambda_{q}$ be the unique quadratic character of $\mathbb{F}_{q}^{\times}$. Now we call pull it back to get a character of $\mathfrak{o}_{F}^{\times}$. Let'sa  fix a uniformizer of $\varpi_{F}$. Now to extend it to a quadratic character of $F^{\times}$, we need to see what value it takes on $\varpi_{F}$. So there are two such characters with $\lambda_{1}(\varpi_{F})=1$ and
    $\lambda_{2}(\varpi_{F})=-1.$ Clearly these two differ by the unique unramified quadratic character of $F^{\times}$, $\nu$. For the following, let $K=F(\sqrt{-\varpi_{F}})$ and $L=F(\sqrt{-u\varpi_{F}})$, where $u$ is a nonsquare unit in $F$. Now $K$ corresponds to $\lambda_{1}$ and $L$ corresponds to $\lambda_{2}$. We can also take the uniformisers of $K$ and $L$ to be $\varpi_{K}=\sqrt{-\varpi_{F}}$ and 
$\varpi_{L}=\sqrt{-u\varpi_{F}}$. This choice is so that norm of these uniformizers are $\varpi_{F}$ and $u\varpi_{F}$ respectively. 
\begin{proposition}\cite[(1.1.5)]{MR721997}
    Let $\sigma$ be a representation of W$_{F}$ and let $\chi$ be an unramified character of W$_{F}$.

\begin{equation}
    \varepsilon(\sigma \bigotimes \chi,\psi)=\chi(\varpi_{F})^{a(\sigma)+n(\psi)dim(\sigma)}\cdot \varepsilon(\sigma, \psi)
\end{equation}
\end{proposition}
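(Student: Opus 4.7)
The plan is to reduce to the case of characters via Brauer induction, verify the identity directly from the integral formula for $\varepsilon$ of a character, and then transport the result back through the inductivity of $\varepsilon$-factors. The first step is to observe that both sides of the claimed identity are multiplicative in $\sigma$ under direct sums: the left-hand side by the additivity property $\varepsilon(\sigma_1\oplus\sigma_2,\psi)=\varepsilon(\sigma_1,\psi)\varepsilon(\sigma_2,\psi)$, and the right-hand side because $a(\sigma)$ and $\dim(\sigma)$ are both additive, so the exponent of $\chi(\varpi_F)$ is additive as well. By Brauer's theorem, every virtual Weil--Deligne representation is a $\mathbb{Z}$-linear combination of monomial representations $\mathrm{Ind}_{W_E}^{W_F}(\mu)$ with $\mu$ a character, so it suffices to prove the identity in this case, and in particular for $\sigma=\mu$ itself a character of $W_F$.

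For $\sigma=\mu$ an unramified character, both sides collapse immediately via $\varepsilon(\mu,\psi)=\mu(\varpi_F)^{n(\psi)}$ and $a(\mu)=0$. For $\mu$ ramified of conductor $a>0$, one uses the Gauss-sum integral
$$\varepsilon(\mu,\psi)\;\propto\;\int_{\varpi_F^{-a-n(\psi)}\mathfrak{o}_F^{\times}}\mu^{-1}(x)\,\psi(x)\,dx,$$
and observes that the unramified character $\chi$ is the constant $\chi(\varpi_F)^{-a-n(\psi)}$ on this coset. Pulling the constant out of the integral produces exactly $\chi(\varpi_F)^{a+n(\psi)}\varepsilon(\mu,\psi)$, matching the claim since $\dim(\mu)=1$.

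For the induced case $\sigma=\mathrm{Ind}_{W_E}^{W_F}(\mu)$, one uses the projection formula $\sigma\otimes\chi=\mathrm{Ind}_{W_E}^{W_F}(\mu\otimes\chi|_{W_E})$ together with Deligne's inductivity
$$\varepsilon(\mathrm{Ind}_{W_E}^{W_F}\rho,\psi)=\lambda(E/F,\psi)^{\dim\rho}\,\varepsilon(\rho,\psi\circ\mathrm{Tr}_{E/F}).$$
The Langlands constants $\lambda(E/F,\psi)$ cancel in the ratio $\varepsilon(\sigma\otimes\chi,\psi)/\varepsilon(\sigma,\psi)$. Since $\chi|_{W_E}$ is still unramified with $\chi|_{W_E}(\varpi_E)=\chi(\varpi_F)^{f(E/F)}$, the character case applied over $W_E$ contributes an exponent of $f(E/F)\bigl(a(\mu)+n(\psi_E)\bigr)$ on $\chi(\varpi_F)$, where $\psi_E=\psi\circ\mathrm{Tr}_{E/F}$.

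The main obstacle is purely combinatorial bookkeeping at this last step: one must verify that
$$f(E/F)\bigl(a(\mu)+n(\psi_E)\bigr)=a(\sigma)+n(\psi)\dim(\sigma).$$
This follows from the conductor--discriminant formula $a(\mathrm{Ind}_{W_E}^{W_F}\mu)=f(E/F)\,a(\mu)+d_{E/F}\dim(\mu)$, the identity $n(\psi\circ\mathrm{Tr}_{E/F})=e(E/F)\,n(\psi)+d_{E/F}$, and $e(E/F)f(E/F)=[E:F]=\dim(\sigma)$. All the remaining content of the argument is formal manipulation of the Deligne--Langlands axioms for $\varepsilon$, so the bulk of the actual work lies in lining up these three invariants correctly.
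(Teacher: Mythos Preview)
The paper does not give its own proof of this proposition; it is simply quoted from Tunnell \cite[(1.1.5)]{MR721997} as a known input and used as a black box in the proof of Proposition~3.3. Your argument---Brauer induction to reduce to monomial representations, Tate's local integral for the one-dimensional case, then Deligne's inductivity in degree zero to transport the result through induction---is the standard proof of this identity and is correct in outline.

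One minor bookkeeping point: you use the same symbol $d_{E/F}$ in both the conductor--discriminant formula and the formula for $n(\psi\circ\mathrm{Tr}_{E/F})$. In the first it denotes the discriminant exponent $v_F(\mathrm{disc}(E/F))$, while in the second it is the different exponent $v_E(\mathfrak{d}_{E/F})$; these differ by a factor of $f(E/F)$. Once that is corrected, the claimed identity
\[
f(E/F)\bigl(a(\mu)+n(\psi_E)\bigr)=a(\sigma)+n(\psi)\dim(\sigma)
\]
does hold. In the paper's actual application (Proposition~3.3) the extension $K/F$ is totally ramified, so $f=1$ and the distinction is invisible anyway.
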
    
\begin{proposition}
    Let $\sigma$ be an irreducible two-dimensional representation of $W_{F}$, with trivial central character induced from a quasicharacter $\varkappa$ of a ramified quadratic extension K. Then 
    \[\ \epsilon_{p}= \frac{\varepsilon(\sigma \bigotimes w_{L/F}, \psi)}{\varepsilon(\sigma,\psi)} = \begin{cases} 
          1 &  cond(\varkappa)=1\\
          1 &  L=K\\
          -1 & otherwise \\
       \end{cases}
    \]where $\psi$ is an additive character of $F$ with conductor 0.
    
    \end{proposition}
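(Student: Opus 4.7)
The plan is to use the inductivity of $\varepsilon$-factors to convert the twist by the \emph{ramified} character $\omega_{L/F}$ of $W_{F}$ into an \emph{unramified} twist on $W_{K}$, where Proposition~3.2 applies directly. The projection formula gives $\sigma\otimes\omega_{L/F}\cong\mathrm{Ind}_{W_{K}}^{W_{F}}\!\bigl(\varkappa\cdot\omega_{L/F}|_{W_{K}}\bigr)$, and by the inductivity of $\varepsilon$-factors the Langlands constant $\lambda(K/F,\psi)$ cancels between numerator and denominator, so
\[
\epsilon_{p} \;=\; \frac{\varepsilon\!\bigl(\varkappa\cdot(\omega_{L/F}\circ N_{K/F}),\,\psi_{K}\bigr)}{\varepsilon(\varkappa,\psi_{K})},
\]
with $\psi_{K}=\psi\circ\mathrm{Tr}_{K/F}$; here local class field theory identifies $\omega_{L/F}|_{W_{K}}$ with $\omega_{L/F}\circ N_{K/F}$ viewed as a character of $K^{\times}$.

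I would next compute $\omega_{L/F}\circ N_{K/F}$ case by case. If $L=K$ then $N_{K/F}(K^{\times})=\ker\omega_{K/F}$, so this character is trivial and $\epsilon_{p}=1$ at once. If $L\neq K$, Lemma~3.1 says $N(\mathfrak{o}_{K}^{\times})$ is the set of squares in $\mathfrak{o}_{F}^{\times}$, hence lies in $\ker\omega_{L/F}$; on the other hand our choice $\varpi_{K}=\sqrt{-\varpi_{F}}$ gives $N(\varpi_{K})=\varpi_{F}$, and $\omega_{L/F}(\varpi_{F})=\lambda_{2}(\varpi_{F})=-1$. Thus $\omega_{L/F}\circ N_{K/F}$ is the unique unramified quadratic character $\nu_{K}$ of $K^{\times}$. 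Applying Proposition~3.2 on $K$ to this unramified twist then yields
\[
\epsilon_{p} \;=\; \nu_{K}(\varpi_{K})^{\,\mathrm{cond}(\varkappa)+n(\psi_{K})} \;=\; (-1)^{\mathrm{cond}(\varkappa)+1},
\]
since $n(\psi_{K})=1$: tame ramification forces $\mathfrak{d}_{K/F}=\mathfrak{p}_{K}$ and $\psi$ has conductor $0$ on $F$.

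The final step, and the principal obstacle, is matching the parity of $\mathrm{cond}(\varkappa)+1$ to the three cases. For $\mathrm{cond}(\varkappa)=1$ the exponent is $2$ and $\epsilon_{p}=1$, giving the first case. For $L\neq K$ and $\mathrm{cond}(\varkappa)>1$, I must show $\mathrm{cond}(\varkappa)$ is forced to be even so that $\epsilon_{p}=-1$. Trivial central character of $\sigma$ combined with $\det\sigma=\omega_{K/F}\cdot\varkappa|_{F^{\times}}$ forces $\varkappa|_{F^{\times}}=\omega_{K/F}$, which has conductor $1$ on $F^{\times}$, so $\varkappa$ is trivial on $1+\mathfrak{p}_{F}^{m}$ for every $m\geq 1$. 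Suppose toward contradiction that $\mathrm{cond}(\varkappa)=2m+1$ with $m\geq 1$. Since $\varpi_{F}=-\varpi_{K}^{2}$, the inclusion $1+\mathfrak{p}_{F}^{m}\hookrightarrow 1+\mathfrak{p}_{K}^{2m}$ induces a map
\[
(1+\mathfrak{p}_{F}^{m})/(1+\mathfrak{p}_{F}^{m+1}) \;\longrightarrow\; (1+\mathfrak{p}_{K}^{2m})/(1+\mathfrak{p}_{K}^{2m+1})
\]
between two copies of the common residue field $\mathbb{F}_{q}$, which a direct check (using $\varpi_{F}^{m}=(-1)^{m}\varpi_{K}^{2m}$) shows is an isomorphism. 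Triviality of $\varkappa$ on $1+\mathfrak{p}_{F}^{m}$ together with triviality on $1+\mathfrak{p}_{K}^{2m+1}$ then forces $\varkappa$ trivial on all of $1+\mathfrak{p}_{K}^{2m}$, contradicting $\mathrm{cond}(\varkappa)=2m+1$. Hence $\mathrm{cond}(\varkappa)$ is even and $\epsilon_{p}=-1$, completing the three cases. Everything else is a routine assembly of inductivity, local class field theory, Lemma~3.1 and Proposition~3.2; the filtration calculation above is the only step requiring care.
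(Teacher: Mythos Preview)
Your proof is correct and follows essentially the same route as the paper: reduce via inductivity of $\varepsilon$-factors to an unramified twist on $K^{\times}$, apply Lemma~3.1 and Proposition~3.2 to obtain $(-1)^{\mathrm{cond}(\varkappa)+1}$, and finish by pinning down the parity of $\mathrm{cond}(\varkappa)$ from the trivial-central-character condition. The only difference is that the paper merely asserts ``since $\sigma$ has trivial central character, $a(\varkappa)=1$ or even,'' while you supply the filtration argument explicitly.
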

    \begin{proof}
       The method of proof is following Theorem 3.2 in \cite{MR3056552}. We know 
       $$\ \epsilon_{p}= \frac{\varepsilon(\varkappa \bigotimes {\tilde{w}_{L/F},\tilde{\psi})}}{\varepsilon(\varkappa,\tilde{\psi})}$$
where ${\tilde{w}_{L/F}=w_{L/F} \circ N_{K/F}}$, where $N_{K/F}:K^{\times} \rightarrow F^{\times}$ denote the norm map and ${\tilde{\psi}_{L/F}=\psi \circ Tr_{K/F}}$, where $Tr_{K/F}$ is the trace map from $K$ to $F$ and $cond(\tilde{\psi})=d(K/F)=1$. Lemma 3.1 implies that $\tilde{w}_{L/F}$ is an unramified character of K$^{\times}$. Hence $\varepsilon_{p}=\tilde{w}_{L/F}(\varpi_{K})^{a(\varkappa)+1}$. Now $w_{L/F}(N_{K/F}(\varpi_{K}))^{a(\varkappa)+1}=w_{L/F}(\varpi_{F})^{a(\varkappa)+1}=(-1)^{a(\varkappa)+1}$ if $L$ is not isomorphic to $K$ and $1$ otherwise. Since $\sigma$ has trivial central character, we know $a(\varkappa)=1$ or even. Hence it follows. By working  with a different uniformiser, we get the statement for all quadratic extensions. 
    \end{proof}
    In what is to follow, let $\chi$ be either the sign or trivial character of $E^{\times}(E$ could be $L$ or $K$, as above). Let $m(\sigma, \chi)$ denote the multiplicity of $\chi$ in $\sigma$ when restricted to $E^{\times}$.
 By the work of Tunnell in \cite{MR721997}, 
\begin{eqnarray}
m(\sigma,\chi)=(1+\varepsilon(\sigma_{E}\bigotimes\chi^{-1}))Det(\sigma(-1))/2   \\ 
\varepsilon(\sigma_{E}\bigotimes\chi^{-1})=\varepsilon(\sigma \bigotimes Ind(\chi^{-1}))w_{E/K}(-1)
\end{eqnarray}

 Hence we need to know what $Ind(\chi^{-1})$ is. (In our case $\chi=\chi^{-1})$. 
\begin{lemma}
     
\begin{equation}
        Ind_{E}^{F}(\chi)=(\chi \circ N_{E/F}) \bigoplus (\chi \circ N_{E/F}.w_{E/F})
    \end{equation}
\end{lemma}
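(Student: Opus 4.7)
The plan is to recognize the lemma as a direct application of the projection formula for induced representations, combined with the standard decomposition of $\mathrm{Ind}_E^F(\mathbf{1})$. First I would verify that the characters $\chi$ in play (trivial and sign of $E^\times$) factor through the norm map, so that the right-hand side genuinely makes sense as characters of $F^\times$. For the trivial character this is tautological; for the sign character $\nu_E$, the identity $v_F \circ N_{E/F} = v_E$ on $E^\times$ (which holds for any ramified quadratic extension, since $e=2$) shows $\nu_E = \nu \circ N_{E/F}$, where $\nu$ is the unramified quadratic character of $F^\times$. So in either case $\chi = \tilde\chi \circ N_{E/F}$ for a character $\tilde\chi$ of $F^\times$.

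Next I would establish the base case $\mathrm{Ind}_E^F(\mathbf{1}) \cong \mathbf{1} \oplus w_{E/F}$. Since $W_E$ is an index-two subgroup of $W_F$, the induced representation of the trivial character is the pullback of the regular representation of $W_F/W_E \cong \mathbb{Z}/2\mathbb{Z}$, which splits as the trivial character plus the sign character of the quotient; under local class field theory the latter corresponds to $w_{E/F}$.

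Finally, I would apply the projection formula (Frobenius reciprocity in its push-pull form)
\[
\mathrm{Ind}_{E}^{F}\!\bigl(\tilde\chi \circ N_{E/F}\bigr) \;\cong\; \tilde\chi \otimes \mathrm{Ind}_{E}^{F}(\mathbf{1}),
\]
and combine with the previous step to obtain
\[
\mathrm{Ind}_{E}^{F}(\chi) \;\cong\; \tilde\chi \otimes \bigl(\mathbf{1} \oplus w_{E/F}\bigr) \;\cong\; \tilde\chi \;\oplus\; \bigl(\tilde\chi \cdot w_{E/F}\bigr),
\]
which is the claimed decomposition.

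There is no serious obstacle here: once the factorization $\chi = \tilde\chi \circ N_{E/F}$ is in place, the statement is a formal consequence of the projection formula and the decomposition $\mathrm{Ind}_E^F(\mathbf{1}) = \mathbf{1} \oplus w_{E/F}$. The only mild subtlety is bookkeeping the identification, via local class field theory, between characters of $E^\times$ and characters of $W_E$, so that the projection formula on the Weil group side translates cleanly into the stated identity at the level of characters of $E^\times$ and $F^\times$.
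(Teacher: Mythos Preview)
Your argument is correct and follows essentially the same route as the paper, which simply cites Frobenius reciprocity; the projection formula you invoke is precisely the push-pull form of that reciprocity, and your explicit verification that $\chi$ factors through $N_{E/F}$ (trivial for $\chi=\mathbf{1}$, and $\nu_E=\nu\circ N_{E/F}$ for the sign character) fills in a step the paper leaves implicit in its notation.
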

\begin{proof}
    This follows from Frobenius reciprocity.
\end{proof}
Therefore it is enough to know the ratio $\frac{\varepsilon(\sigma \bigotimes w_{E/F})}{\varepsilon(\sigma)} $, for each quadratic extension $E$ of $F$. 
\begin{corollary}
    
    Let $\sigma$ be an irreducible two-dimensional representation of $W_{F}$, with trivial central character induced from a quasicharacter $\varkappa$ of a ramified quadratic extension K. Then trivial/sign character of $L^{\times}$ appears in the restriction of $\sigma$ to $L^{\times}$ if and only if $q\equiv 3\bmod 4$. 
\end{corollary}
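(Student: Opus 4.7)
The plan is to combine Tunnell's formulas $(7)$ and $(8)$ with Lemma~$3.4$ and Proposition~$3.3$ to compute $m(\sigma,\chi)$ for $\chi$ equal to the trivial character and the sign character $\nu_L$ of $L^\times$, and to show that in both cases $m(\sigma,\chi) = (1 - \lambda_q(-1))/2$. Since $\sigma$ has trivial central character, $\det\sigma \equiv 1$; in particular $\det\sigma(-1) = 1$ and $\varepsilon(\sigma)^2 = 1$ (from $\sigma \cong \sigma^\vee$ together with the standard identity $\varepsilon(\sigma)\varepsilon(\sigma^\vee) = \det\sigma(-1)$). Moreover the Artin conductor $a(\sigma)$ is odd and $\geq 3$, which forces the inducing character $\varkappa$ of $K^\times$ to satisfy $\operatorname{cond}(\varkappa) \geq 2$. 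The strategy is to factor each $\varepsilon(\sigma \otimes \operatorname{Ind}(\chi))$ as a product of two epsilon-twists, each of which is computable by Proposition~$3.3$ or the unramified twist formula $(5)$.

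For $\chi$ trivial, Lemma~$3.4$ gives $\operatorname{Ind}_L^F(\chi) = \mathbf{1} \oplus w_{L/F}$. Since $L \neq K$ and $\operatorname{cond}(\varkappa) \geq 2$, Proposition~$3.3$ yields $\varepsilon(\sigma \otimes w_{L/F}) = -\varepsilon(\sigma)$, so $\varepsilon(\sigma \otimes \operatorname{Ind}(\chi)) = -\varepsilon(\sigma)^2 = -1$. Combining with $(8)$ and the identity $w_{L/F}(-1) = \lambda_q(-1)$ (which follows from $w_{L/F}|_{\mathfrak{o}_F^\times} = \lambda_q$), one obtains $\varepsilon(\sigma_L \otimes \chi) = -\lambda_q(-1)$.

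For $\chi = \nu_L$, I first identify $\operatorname{Ind}_L^F(\nu_L)$: since $\nu_L$ is Galois-invariant, it extends to a character of $F^\times$, and using $N_{L/F}(\varpi_L) = u\varpi_F$ together with Lemma~$3.1$ one checks that the unramified quadratic character $\nu$ of $F^\times$ is such an extension, hence $\operatorname{Ind}_L^F(\nu_L) = \nu \oplus \nu \cdot w_{L/F}$. The explicit identifications preceding Proposition~$3.3$ (namely $w_{K/F} = \lambda_1$, $w_{L/F} = \lambda_2$, $\lambda_1 \lambda_2 = \nu$) give $\nu \cdot w_{L/F} = w_{K/F}$. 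By $(5)$, $\varepsilon(\sigma \otimes \nu) = \nu(\varpi_F)^{a(\sigma)} \varepsilon(\sigma) = -\varepsilon(\sigma)$ since $a(\sigma)$ is odd, while Proposition~$3.3$ applied in the case $L = K$ yields $\varepsilon(\sigma \otimes w_{K/F}) = \varepsilon(\sigma)$. Multiplying, $\varepsilon(\sigma \otimes \operatorname{Ind}(\nu_L)) = -\varepsilon(\sigma)^2 = -1$, and $(8)$ again gives $\varepsilon(\sigma_L \otimes \nu_L) = -\lambda_q(-1)$.

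Feeding either value into $(7)$ and using $\det\sigma(-1) = 1$ gives $m(\sigma,\chi) = (1 - \lambda_q(-1))/2$, which equals $1$ precisely when $q \equiv 3 \pmod{4}$ and $0$ when $q \equiv 1 \pmod{4}$. The main obstacle, in my view, is the identification of $\operatorname{Ind}_L^F(\nu_L)$: the trivial case reduces almost immediately to a single application of Proposition~$3.3$, whereas for $\nu_L$ one must pick the right extension of $\nu_L$ to $F^\times$ and recognize the second summand as $w_{K/F}$ in order to invoke Proposition~$3.3$ a second time, now in its $L = K$ instance.
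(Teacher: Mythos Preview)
Your proof is correct and follows the same strategy as the paper: feed Tunnell's multiplicity formula together with the epsilon-factor ratio of Proposition~3.3 to obtain $m(\sigma,\chi)=\tfrac{1-w_{L/F}(-1)}{2}$. The paper's proof is a single sentence that essentially only makes the trivial-character case explicit, whereas your treatment of the sign character $\nu_L$ via the identification $\operatorname{Ind}_L^F(\nu_L)=\nu\oplus w_{K/F}$ (and the subsequent use of both the $L=K$ and the unramified-twist instances of the epsilon computation) spells out what the paper leaves implicit.
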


\begin{proof}

    It is clear that since $\epsilon_{p}$  is $-1$, the multiplicity is $\frac{1-w_{L/F}(-1)}{2}$ and hence the result follows.
    \end{proof}
\begin{corollary}
    Let $\sigma$ be as above. The trivial or sign character  appears in the restriction of $\sigma$ to $K^{\times}$ if and only if q$\equiv 1\bmod 4$. 
\end{corollary}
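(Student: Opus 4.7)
The plan is to mirror the proof of Corollary~3.5, swapping the roles of $L$ and $K$: here the twisting extension coincides with the ramified extension $K$ from which $\sigma$ is dihedrally induced, so the epsilon-ratio furnished by Proposition~3.3 flips sign.

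I would first apply Proposition~3.3 with $L=K$, landing in the ``$L=K$'' branch and yielding $\epsilon_p=\varepsilon(\sigma\otimes w_{K/F},\psi)/\varepsilon(\sigma,\psi)=+1$ (versus $-1$ in Corollary~3.5). Next, using Lemma~3.6---which, since both the trivial and the sign character of $K^\times$ are Galois-invariant, decomposes $\mathrm{Ind}_K^F(\chi)$ as $\tilde\chi\oplus\tilde\chi\cdot w_{K/F}$---together with the Tunnell formulas~(5)--(6), I expect the multiplicity to collapse to
\[
m(\sigma,\chi)=\frac{1+w_{K/F}(-1)}{2},
\]
the analogue of $(1-w_{L/F}(-1))/2$ from Corollary~3.5 with the sign flipped to reflect $\epsilon_p=+1$.

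It then remains to evaluate $w_{K/F}(-1)$. By local class field theory $w_{K/F}$ is the quadratic character of $F^\times$ with kernel $N_{K/F}(K^\times)$, so $w_{K/F}(-1)=1$ iff $-1\in N_{K/F}(K^\times)$. Since $-1\in\mathfrak{o}_F^\times$, Lemma~3.1 identifies this with $-1$ being a square in $\mathfrak{o}_F^\times$, and since $p$ is odd, Hensel's lemma further reduces it to $-1$ being a square in $\mathbb{F}_q$, i.e.\ to $q\equiv 1\pmod 4$. Consequently the multiplicity equals $1$ iff $q\equiv 1\pmod 4$, as asserted. The one nuisance I expect to be the main obstacle is the sign bookkeeping in~(6): one should verify that the factor $w_{E/K}(-1)$ appearing there is trivial when $E=K$, so that the single switch $\epsilon_p:-1\rightsquigarrow+1$ really is what flips $1-w(-1)$ to $1+w(-1)$. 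Beyond this check, everything follows by direct analogy with the proof of Corollary~3.5.
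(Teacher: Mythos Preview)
Your proposal is correct and follows exactly the approach the paper intends: the paper's own proof is the single line ``This is similar to the previous case,'' and what you have written is precisely the mirror of Corollary~3.5 with $\epsilon_p$ switching from $-1$ to $+1$, yielding $m(\sigma,\chi)=\tfrac{1+w_{K/F}(-1)}{2}$ and hence the condition $q\equiv 1\pmod 4$ via Lemma~3.1. Your bookkeeping concern is harmless: since $w_{K/F}(-1)=w_{L/F}(-1)$ (both ramified quadratic characters agree on units by Lemma~3.1), the only change between the two corollaries really is the sign of $\epsilon_p$.
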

\begin{proof}
    This is similar to the previous case.
\end{proof}

Let us summarize the results of this section. Let $\pi$ be a representation of $B^{\times}$ dihedrally induced from a ramified quadratic extension $K$ of $F$. Let $E$ be a ramified quadratic extension of F. Let $\chi$ be the sign or the trivial character of $E^{\times}$. 
\begin{theorem}
  
    \begin{itemize}
        \item      If $E\cong$K, then $m(\pi, \chi)=1$ if and only if q$\equiv 3 \bmod 4$.
        \item  If E is not isomorphic to K, then $m(\pi, \chi)=1$ if and only if q$\equiv 1 \bmod 4$.       
    \end{itemize}
\end{theorem}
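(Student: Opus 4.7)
The plan is to reduce Theorem 3.7 to the two preceding corollaries by invoking the Jacquet--Langlands/Waldspurger dichotomy. Recall Waldspurger's multiplicity equation
\[
\dim\mathrm{Hom}_{E^{\times}}(\pi,\chi) + \dim\mathrm{Hom}_{E^{\times}}(\pi^{\prime},\chi) = 1,
\]
which relates the multiplicities on the quaternionic and $GL_{2}$ sides, where $\pi^{\prime}$ is the Jacquet--Langlands lift of $\pi$. Since $\chi$ is either the trivial or sign character of $E^{\times}$ and $E/F$ is ramified, a short check using Lemma 3.1 shows $\chi$ restricts trivially to $F^{\times}$, so compatibility with the trivial central character of $\pi$ and $\pi^{\prime}$ is automatic and the dichotomy applies. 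Via the Local Langlands Correspondence, $\dim\mathrm{Hom}_{E^{\times}}(\pi^{\prime},\chi) = m(\sigma,\chi)$, giving the clean identity
\[
m(\pi,\chi) \;=\; 1 - m(\sigma,\chi).
\]

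From here the proof is essentially mechanical. If $E\cong K$, Corollary 3.6 tells us $m(\sigma,\chi) = 1$ exactly when $q \equiv 1 \pmod 4$, hence $m(\pi,\chi) = 1$ exactly when $q \equiv 3 \pmod 4$. If $E\cong L$ (so $E\not\cong K$), Corollary 3.5 tells us $m(\sigma,\chi) = 1$ exactly when $q \equiv 3 \pmod 4$, hence $m(\pi,\chi) = 1$ exactly when $q \equiv 1 \pmod 4$. These are precisely the two bullets in the statement.

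The only nontrivial work has already been done upstream, in Proposition 3.3, where the parity of $q \bmod 4$ entered through the sign $w_{E/F}(-1)$ extracted from the epsilon factorization in equation (8) combined with the Frobenius reciprocity decomposition of Lemma 3.4. At the level of Theorem 3.7 itself I do not anticipate any further obstacle; one simply takes the complementary multiplicity to pass from the $GL_{2}$ side to $B^{\times}$. The one point worth flagging explicitly is that both characters under consideration are self-inverse (since they are $\{\pm 1\}$-valued, $\chi = \chi^{-1}$), so the formula for $m(\sigma,\chi)$ in equation (7) applies verbatim without needing to distinguish $\chi$ from its inverse.
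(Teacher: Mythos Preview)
Your proposal is correct and follows essentially the same approach as the paper: both invoke the Waldspurger dichotomy (equation (10)) to pass from $m(\sigma,\chi)$ on the $GL_2$ side to $m(\pi,\chi)$ on the $B^{\times}$ side, then read off the result from Corollaries 3.5 and 3.6. Your write-up is more detailed (the central-character check and the $\chi=\chi^{-1}$ remark are nice touches, though the appeal to Lemma 3.1 for the former is unnecessary---it follows directly from $v_E|_{F^{\times}}=2v_F$ for ramified $E/F$), but the argument is the same.
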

\begin{proof}
    We know that a character $\chi$ of the torus $E^{\times}$ appears in the restriction of $\pi$ if and only if $\chi$ does not appear in the restriction of the Jacquet--Langlands transfer of $\pi$ to $E^{\times}$, this follows.
\end{proof}
Now for the applications into the quaternionic modular forms case, we need to know $\pi^{\mathfrak{o}^{\times}_{E}}$, where for each quadratic extension $E$.  Now since $\pi$ has trivial central character, for unramified case this means $\pi^{\mathfrak{o}^{\times}_{E}}=\pi^{E^{\times}}$, and for the ramified case this means that $\pi^{\mathfrak{o}_{E}^{\times}}=\pi^{E^{\times}}\bigoplus\pi^{\nu_{E}}$. 

 Let us recall our notations again. Let $\pi$ be a representation of $B^{\times}$ dihedrally induced from a ramified quadratic extension $K$ of $F$. Let $E$ be a ramified quadratic extension of $F$. 
\begin{corollary}
\begin{enumerate}
    \item $\dim \pi^{\mathfrak{o}^{\times}_{E}}=2$  if\begin{enumerate}
        \item 
    $E\cong K$, $q\equiv 3 \bmod4$ 
    \item  $E\cong L$, $q\equiv 1 \bmod4$ 
   \end{enumerate} 
\item $\dim \pi^{\mathfrak{o}^{\times}_{E}}=0$  if \begin{enumerate}
        \item 
    $E\cong K$, $q\equiv 1 \bmod4$ 
    \item  $E\cong L$, $q\equiv 3 \bmod4$ 
   \end{enumerate} 
 \end{enumerate}
\end{corollary}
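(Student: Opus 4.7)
The plan is to read off the two summands of the decomposition
\[
\pi^{\mathfrak{o}_E^\times} \;=\; \pi^{E^\times} \oplus \pi^{\nu_E}
\]
using Theorem 3.5, and to observe that both summands are controlled by the \emph{same} parity condition on $q$ modulo $4$, so that they either both contribute $1$ (giving total dimension $2$) or both contribute $0$.

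First I would record why the displayed decomposition holds in the ramified case. Because $E^\times = \mathfrak{o}_E^\times \cdot \langle \varpi_E\rangle$ and $\varpi_E^2 \in \varpi_F \mathfrak{o}_F^\times$ acts trivially on $\pi$ by the trivial central character hypothesis, the operator $\pi(\varpi_E)$ acts on the finite-dimensional space $\pi^{\mathfrak{o}_E^\times}$ as an involution. Its $(+1)$-eigenspace is precisely $\pi^{E^\times}$, and its $(-1)$-eigenspace is the $\nu_E$-isotypic subspace $\pi^{\nu_E}$, since $\nu_E$ is trivial on $\mathfrak{o}_E^\times$ and sends $\varpi_E$ to $-1$. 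This is the decomposition already invoked just before the corollary.

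Next I would identify dimensions with multiplicities: $\dim \pi^{E^\times} = m(\pi, \mathbf{1})$ and $\dim \pi^{\nu_E} = m(\pi, \nu_E)$, where $\mathbf{1}$ and $\nu_E$ are the trivial and sign characters of $E^\times$. Theorem 3.5 applies to \emph{either} of these characters $\chi$ and gives the same answer: $m(\pi,\chi) = 1$ exactly when $(E \cong K, \, q\equiv 3 \bmod 4)$ or $(E \not\cong K,\, q\equiv 1 \bmod 4)$, and $m(\pi,\chi) = 0$ otherwise. The reason the trivial and sign characters give the identical condition is that Corollaries 3.3 and 3.4 were derived from the ratio $\varepsilon_p$ of epsilon factors, which depends on $E$ and $K$ but not on whether we twist by the sign character of $E^\times$; Frobenius reciprocity (Lemma~3.2) and the trivial-central-character constraint then level the two cases.

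Finally I would add the two contributions. In cases $(1a)$ and $(1b)$, Theorem 3.5 gives $m(\pi,\mathbf{1}) = m(\pi,\nu_E) = 1$, hence $\dim \pi^{\mathfrak{o}_E^\times} = 2$; in cases $(2a)$ and $(2b)$ both multiplicities vanish and $\dim \pi^{\mathfrak{o}_E^\times} = 0$. There is no serious obstacle: once the decomposition of $\pi^{\mathfrak{o}_E^\times}$ and Theorem 3.5 are both in hand, the corollary is a matter of bookkeeping. The only point that requires a line of care is verifying that the epsilon-factor computation in Proposition 3.2 indeed yields the \emph{same} sign for both the trivial and the sign character input, which is clear since the nontrivial twist $w_{E/F}(-1)$ appearing in Tunnell's formula has already been absorbed into the statements of Corollaries 3.3 and 3.4.
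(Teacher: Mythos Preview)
Your proposal is correct and follows exactly the approach the paper takes: the paper records the decomposition $\pi^{\mathfrak{o}_E^\times}=\pi^{E^\times}\oplus\pi^{\nu_E}$ in the line immediately preceding the corollary and then states the result without further argument, relying on the preceding theorem (your ``Theorem 3.5'') which computes $m(\pi,\chi)$ for both the trivial and sign characters simultaneously. Your write-up simply fills in the bookkeeping the paper leaves implicit, including the involution argument for the decomposition; note only that the internal numbering you cite is shifted relative to the paper (the summary theorem is 3.7, the corollaries on $L^\times$ and $K^\times$ are 3.5 and 3.6, and the induction lemma is 3.4).
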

    
\begin{remark}
    We need to restrict to the case for odd primes, since we use Lemma 3.1, and which is not true when $p=2$. This fails even in the basic case of $F=\mathbb{Q}_{2}$. It is well known that $x \in \mathcal{O}_{F}^{\times}$ is a square $\iff x\equiv 1 \bmod 8$. For example if $E=\mathbb{Q}_{2}(\sqrt{2})$. Clearly $-1=N_{E/F}(1+\sqrt{2})$, but $-1$ is not a square in $F$, since $-1\not\equiv1\bmod 8$ in $\mathcal{O}_{F}$. Hence our method of proof does not work for $p=2$.
\end{remark}
\begin{table}
    \centering
    \begin{tabular}{|c|c|c|c|}\hline
         Representation $\pi$&  Quadratic extension $E$&  $\dim \pi^{ \mathfrak{o}_{E}^{\times}}$ \\\hline
         $1$-dimensional& Any extension &  1 $\iff$ $\pi \circ N_{E/F}=1$ \\\hline
         Even conductor, minimal&  any extension& $e(E/F)$  \\\hline
                 Non-minimal representation, 
                 $\dim\pi>1$ & any quadratic extension & 0  \\ \hline

        Odd conductor, $\dim \pi>1$ & unramified extension & 1  \\\hline

        Odd conductor $\dim \pi>1$ & any ramified extension & ? \\ \hline
    \end{tabular}
    \caption{Summary of the needed results from \cite{MR4127854}}
    \label{table:Table}
\end{table} 
\section{\textbf{Global Results}}
Let $F$ be a totally real field of degree $d$ with $\mathfrak{o}=\mathfrak{o}_{F}$ its ring of integers. Let $\nu_{1}, \nu_{2},......\nu_{d}$ be the real embeddings of $F$. Let $B$ be a definite quaternion algebra over $F$, with $\mathfrak{D}_{B}$ its discriminant, and $\mathcal{O}$ be a special order in $B$ of level $\mathfrak{N}$. 
This means that for every $\mathfrak{p} | \mathfrak{D}$, $B_{\mathfrak{p}}$ is a division algebra and $\mathcal{O}_\mathfrak{p}$ be a Bass order of level $v_{\mathfrak{p}}(\mathfrak{N})$ in $B_{\mathfrak{p}}$. 
We know that every such local order can be written as $\mathcal{O}_{r}(E_{\mathfrak{p}})$, where $E_{\mathfrak{p}}$ is a quadratic extension of $F_{\mathfrak{p}}$. 
We say $\mathcal{O}_{\mathfrak{p}}$ is unramified if $E_{\mathfrak{p}}$ is unramified and ramified otherwise.
If $\mathcal{O}_{\mathfrak{p}}$ is unramified for some $\mathfrak{p}$, then it means $v_{\mathfrak{p}}(\mathfrak{N})$ is odd, while both parities are possible if the local order is ramified. 
We can write $\mathfrak{N}=\mathfrak{N}_{1}\mathfrak{N}_{2}\mathfrak{M}$, where $\mathfrak{N}_{1},\mathfrak{N}_{2}$ and $\mathfrak{M}$ are pairwise coprime, and 
\begin{enumerate}
    \item for each $\mathfrak{p}|\mathfrak{N}_{1}$, $\mathcal{O}_{\mathfrak{p}}$ is unramified,
    \item for each $\mathfrak{p}|\mathfrak{N}_{2}$, $\mathcal{O}_{\mathfrak{p}}$ is ramified,
    \item $\mathfrak{M}$ is coprime to $\mathfrak{D}$.
\end{enumerate} 
Hence this means for $\mathfrak{p}|\mathfrak{M}, B^{\times}_{\mathfrak{p}} \cong GL_{2}(F_{\mathfrak{p}})$. 
Let $M_{\mathbf{k}}(\mathcal{O})$ denote the space of quaternionic modular forms of weight $\mathbf{k}$, and level $\mathcal{O}$, and let $S_{\mathbf{k}}(\mathcal{O})$ be the corresponding space of cusp forms. 
These classical spaces corresponds to spaces of cuspidal automorphic representations of $B^{\times}(\mathbb{A})$. We say an eigenform is $\mathfrak{p}$-primitive, if the associated local representation $\pi_{\mathfrak{p}}$ is minimal. For an ideal $\mathfrak{a}\subset\mathfrak{o}$, we say an eigenform is $\mathfrak{a}$-primitive if it is $\mathfrak{p}$-primitive for all $\mathfrak{p}\mid\mathfrak{a}$.
The following propositions are in \cite{MR4127854}. 
\begin{proposition}\cite[(4.2),(4.3)]{MR4127854}
We have the following Hecke-module isomorphisms
\begin{eqnarray}
M_{\mathbf{k}}(\mathcal{O})&\cong& \bigoplus_{\pi} \pi_{\mathbf{k}}^{\mathcal{O^{\times}}} \textit{ and } \\
S_{\mathbf{k}}(\mathcal{O})&\cong& \bigoplus_{\dim \pi>1} \pi_{\mathbf{k}}^{\mathcal{O^{\times}}}, 
\end{eqnarray}
    where $\pi$ runs over irreducible automorphic representations of $B^{\times}(\mathbb{A})$, with trivial central character.
\end{proposition}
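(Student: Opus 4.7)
The plan is to identify $M_{\mathbf{k}}(\mathcal{O})$ with a Hom-space inside the automorphic spectrum of $B^{\times}(\mathbb{A})$ and then decompose that spectrum, using the fact that for definite $B$ the regular representation splits discretely. First I would rewrite the defining transformation law: giving a $\mathbf{V}_{\mathbf{k}}$-valued function $\varphi$ on $B^{\times}\backslash B^{\times}(\mathbb{A})$ satisfying $\varphi(x\alpha g)=\rho_{\mathbf{k}}(g^{-1})\varphi(x)$ for $g\in B^{\times}_{\infty}$ and $\alpha\in\hat{\mathcal{O}}^{\times}$ is the same data as a $B^{\times}_{\infty}$-equivariant linear map
\[
\Phi_{\varphi}:\mathbf{V}_{\mathbf{k}}^{\vee}\longrightarrow \mathcal{A}\bigl(B^{\times}\backslash B^{\times}(\mathbb{A})\bigr)^{\hat{\mathcal{O}}^{\times}},\qquad \Phi_{\varphi}(\ell)(x)=\ell(\varphi(x)),
\]
where $\mathcal{A}$ is the space of smooth automorphic functions with trivial central character under right translation, and $\mathbf{V}_{\mathbf{k}}^{\vee}$ carries the contragredient of $\rho_{\mathbf{k}}$. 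The check that this assignment is a bijection is purely formal, just contracting $\varphi$ against a basis of $\mathbf{V}_{\mathbf{k}}^{\vee}$.

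Next, because $B$ is definite, the quotient $B^{\times}Z(\mathbb{A})\backslash B^{\times}(\mathbb{A})$ is compact, so $\mathcal{A}$ decomposes as a discrete Hilbert sum $\bigoplus_{\pi}\pi$ indexed by irreducible automorphic representations with trivial central character, each appearing with multiplicity one by strong multiplicity one on $GL_{2}$ together with the Jacquet--Langlands correspondence. Factor $\pi=\pi_{f}\otimes\pi_{\infty}$. The functors $(-)^{\hat{\mathcal{O}}^{\times}}$ and $\mathrm{Hom}_{B^{\times}_{\infty}}(\mathbf{V}_{\mathbf{k}}^{\vee},-)$ are exact on admissible (resp.\ finite-dimensional) representations and commute with the discrete direct sum, since each summand on the right is finite-dimensional and all but finitely many $\pi$ contribute zero at a fixed level and weight. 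Applying them to the decomposition of $\mathcal{A}$ yields
\[
M_{\mathbf{k}}(\mathcal{O}) \;\cong\; \bigoplus_{\pi}\pi_{f}^{\hat{\mathcal{O}}^{\times}}\otimes \mathrm{Hom}_{B^{\times}_{\infty}}(\mathbf{V}_{\mathbf{k}}^{\vee},\pi_{\infty}) \;=\; \bigoplus_{\pi}\pi_{\mathbf{k}}^{\mathcal{O}^{\times}},
\]
where the last equality merely names the tensor factor.

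Finally, the unramified Hecke algebra $\mathcal{H}^{S}$ acts on $\varphi$ by convolution against characteristic functions of $\hat{\mathcal{O}}^{\times}$-double cosets supported outside $S$; this commutes with right translation, so the isomorphism above is automatically $\mathcal{H}^{S}$-linear. For the cusp-form statement, the Eisenstein subspace $E_{\mathbf{k}}(\mathcal{O})$ is spanned by $\varphi=\mu\circ N_{B/F}$ for characters $\mu$ of $F^{\times}\backslash\mathbb{A}_{F}^{\times}$, i.e.\ by the one-dimensional $\pi$. When $\mathbf{k}\neq\mathbf{0}$ each $\rho_{k_{i},\nu_{i}}$ is a nontrivial irreducible, so the Hom-factor into $\mu_{\infty}$ vanishes and the one-dimensional $\pi$ contribute zero; thus $S_{\mathbf{k}}(\mathcal{O})=M_{\mathbf{k}}(\mathcal{O})$ already agrees with the sum over $\dim\pi>1$. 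When $\mathbf{k}=\mathbf{0}$ the Eisenstein piece is precisely the sum over one-dimensional $\pi$, and orthogonality isolates the remaining $\dim\pi>1$ summands. The main obstacle is the discrete-decomposition step: one must combine compactness of $B^{\times}Z(\mathbb{A})\backslash B^{\times}(\mathbb{A})$ with multiplicity one from Jacquet--Langlands to get an actual direct sum (rather than a continuous integral) with the correct indexing set; the rest is formal bookkeeping of invariants, isotypic components, and Hecke actions.
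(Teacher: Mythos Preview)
The paper does not actually supply a proof of this proposition: it is stated with the citation \cite[(4.2),(4.3)]{MR4127854} and no argument is given, so there is no in-paper proof to compare against. Your outline is the standard argument one would expect behind that citation---identify $M_{\mathbf{k}}(\mathcal{O})$ with $\mathrm{Hom}_{B_\infty^{\times}}(\mathbf{V}_{\mathbf{k}}^{\vee},\mathcal{A}^{\hat{\mathcal{O}}^{\times}})$, use compactness of $B^{\times}Z(\mathbb{A})\backslash B^{\times}(\mathbb{A})$ to decompose $\mathcal{A}$ discretely, invoke multiplicity one via Jacquet--Langlands, and separate the one-dimensional summands as the Eisenstein part---and it is correct as written. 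One small remark: multiplicity one for the one-dimensional $\pi$ (the characters $\mu\circ N_{B/F}$) does not come from Jacquet--Langlands, since those do not transfer to cuspidal representations of $GL_{2}$; but for characters of a group multiplicity one is trivial, so this does not affect the argument.
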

Let $\mathcal{O'}$ be a special order that contains $\mathcal{O}$. We call $\varphi \in M_{\mathbf{k}}(\mathcal{O}) $ an old form if $\varphi \in M_{\mathbf{k}}(\mathcal{O'})$. We denote the space generated by all such forms by $M_{\mathbf{k}}^{old}(\mathcal{O})$. Let $M_{\mathbf{k}}^{new}(\mathcal{O})$ be its orthogonal complement in $M_{\mathbf{k}}(\mathcal{O})$. One  analogously defines $S_{\mathbf{k}}^{old}(\mathcal{O})$ and $S_{\mathbf{k}}^{new}(\mathcal{O})$.
\begin{proposition}\cite[Proposition 4.4]{MR4127854}
    \begin{enumerate}
        \item We have the following decomposition 
        \begin{equation}
            S_{\mathbf{k}}^{new}(\mathcal{O})\cong \bigoplus \pi_{\mathbf{k}}^{\mathcal{O^{\times}}}, 
        \end{equation}
            where $\pi_{\mathbf{k}}$ runs over all irreducible automorphic representations of $B^{\times}(\mathbb{A})$ with trivial central character and dim$\pi_{\mathbf{k}}>1$, and $c(\pi_{\mathbf{k}})=\prod{\mathfrak{p}^{c(\pi_{\mathfrak{p}})}}=\mathfrak{N}$.
        \item As Hecke-modules we have the following isomorphism
        \begin{equation}
            S_{\mathbf{k}}^{old}(\mathcal{O})\cong \bigoplus S_{\mathbf{k}}^{new}(\mathcal{O'}),
        \end{equation}
        where $\mathcal{O'}$ runs over all special orders of B, properly containing $\mathcal{O}$.
    \end{enumerate}
\end{proposition}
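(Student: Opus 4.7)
The plan is to deduce the proposition from Proposition 4.1 together with the local theory of conductors. Proposition 4.1 already gives $S_{\mathbf{k}}(\mathcal{O}) \cong \bigoplus_{\dim\pi>1} \pi_{\mathbf{k}}^{\mathcal{O}^\times}$, and the restricted tensor product decomposition of automorphic representations combined with $\mathcal{O}^\times = \prod_{\mathfrak{p}} \mathcal{O}_{\mathfrak{p}}^\times$ yields
\[
\pi_{\mathbf{k}}^{\mathcal{O}^\times} \cong \bigotimes_{\mathfrak{p}} \pi_{\mathfrak{p}}^{\mathcal{O}_{\mathfrak{p}}^\times},
\]
so nonvanishing and dimensions are controlled prime by prime.

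For part (1), I would identify the new summand with those $\pi$ satisfying $c(\pi_{\mathfrak{p}}) = v_{\mathfrak{p}}(\mathfrak{N})$ at every finite place, i.e.\ $c(\pi) = \mathfrak{N}$. If $c(\pi_{\mathfrak{p}}) < v_{\mathfrak{p}}(\mathfrak{N})$ for some $\mathfrak{p}$, then $\pi_{\mathfrak{p}}^{\mathcal{O}_{\mathfrak{p}}^\times}$ coincides with $\pi_{\mathfrak{p}}^{(\mathcal{O}'_{\mathfrak{p}})^\times}$ for a strictly larger local order $\mathcal{O}'_{\mathfrak{p}}$, so $\pi$ descends from a proper overorder of $\mathcal{O}$ and contributes to the oldspace. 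Conversely, when equality holds everywhere, no such enlargement exists and $\pi$ contributes to the newspace. Petersson orthogonality of distinct $\pi$-isotypic components then guarantees that the two collections are orthogonal complements.

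For part (2), I would group summands of $S_{\mathbf{k}}(\mathcal{O})$ by the conductor of $\pi$. Applying part (1) to each special overorder $\mathcal{O}'\supseteq\mathcal{O}$ of level $\mathfrak{N}'$ identifies $S_{\mathbf{k}}^{new}(\mathcal{O}')$ with $\bigoplus_{c(\pi) = \mathfrak{N}'} \pi_{\mathbf{k}}^{(\mathcal{O}')^\times}$. Since $\pi_{\mathbf{k}}^{\mathcal{O}^\times}$ agrees with $\pi_{\mathbf{k}}^{(\mathcal{O}')^\times}$ whenever $\mathcal{O}'$ is an overorder under which $\pi$ is still invariant, summing over $\mathcal{O}'\supseteq\mathcal{O}$ reconstructs all of $S_{\mathbf{k}}(\mathcal{O})$, and removing the summand with $\mathcal{O}'=\mathcal{O}$ leaves $S_{\mathbf{k}}^{old}(\mathcal{O})$. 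Hecke-equivariance is automatic since the unramified Hecke algebra acts on each $\pi$-isotypic piece through the Satake parameters of $\pi$.

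The delicate step is the bookkeeping in part (2): for a representation $\pi$ with $c(\pi) = \mathfrak{N}'$ strictly dividing $\mathfrak{N}$, one must pin down a well-defined special overorder $\mathcal{O}'\supsetneq\mathcal{O}$ of level $\mathfrak{N}'$ for which $\pi$ is new, and verify that copies of $\pi_{\mathbf{k}}^{(\mathcal{O}')^\times}$ attached to different $\mathcal{O}'$ are linearly independent inside $\pi_{\mathbf{k}}^{\mathcal{O}^\times}$. At split primes and at primes where $\mathcal{O}_{\mathfrak{p}}$ is maximal, local overorders are linearly ordered by level, so uniqueness is automatic. At the ramified division-algebra primes treated here, the local order is specified by a choice of embedded quadratic extension $E_{\mathfrak{p}}$, so one must match $E_{\mathfrak{p}}$ to the dihedral-induction data of $\pi_{\mathfrak{p}}$; this is exactly the local restriction problem whose dimensions are recorded in Table 1 and extended by the computations of Section 3, completing the identification.
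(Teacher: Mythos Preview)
The paper does not give its own proof of this proposition: it is quoted verbatim from \cite[Proposition~4.4]{MR4127854} and used as a black box. So there is no argument in the present paper to compare yours against, and your task is really to reconstruct Martin's proof.

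With that said, your sketch has a genuine gap. The assertion that $\pi_{\mathbf{k}}^{\mathcal{O}^\times}$ ``agrees with $\pi_{\mathbf{k}}^{(\mathcal{O}')^\times}$ whenever $\mathcal{O}'$ is an overorder under which $\pi$ is still invariant'' is false at the split primes $\mathfrak{p}\mid\mathfrak{M}$. There $\mathcal{O}_{\mathfrak{p}}$ is an Eichler order, $\mathcal{O}_{\mathfrak{p}}^\times$ is conjugate to a $K_0(\mathfrak{p}^n)$, and Casselman's formula gives $\dim \pi_{\mathfrak{p}}^{K_0(\mathfrak{p}^n)} = n - c(\pi_{\mathfrak{p}}) + 1$, which strictly increases with $n$. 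So passing to a single overorder loses vectors, and your argument for part~(1) does not show that \emph{all} of $\pi^{\mathcal{O}^\times}$ lies in the oldspace when $c(\pi_{\mathfrak{p}})<v_{\mathfrak{p}}(\mathfrak{N})$, nor does your argument for part~(2) produce the right dimension on the right-hand side.

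What actually makes the count work is that there are \emph{several} local Eichler overorders of a given intermediate level: an Eichler order of level $\mathfrak{p}^n$ sits inside exactly $n-m+1$ Eichler orders of level $\mathfrak{p}^m$ (think of subsegments of a length-$n$ path in the Bruhat--Tits tree). Summing $S_{\mathbf{k}}^{new}(\mathcal{O}')$ over all special $\mathcal{O}'\supsetneq\mathcal{O}$ therefore counts each $\pi$ with multiplicity $\prod_{\mathfrak{p}\mid\mathfrak{M}}\bigl(v_{\mathfrak{p}}(\mathfrak{N})-c(\pi_{\mathfrak{p}})+1\bigr)$ at the split primes, which matches $\dim\pi^{\mathcal{O}^\times}$ there; at the division-algebra primes the filtration groups $\mathcal{U}^r$ are normal, so the fixed space really is independent of $r$ once $r\ge c(\pi_{\mathfrak{p}})$ and your reasoning is fine. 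One must also check that the span of the $\pi^{(\mathcal{O}')^\times}$ over these overorders exhausts $\pi^{\mathcal{O}^\times}$, not merely that dimensions agree; this is the step that replaces the level-raising operators of classical Atkin--Lehner theory and is the real content you are missing.
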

By the Jacquet--Langlands transfer, we have a map $$JL:S_{\textbf{k}}(\mathcal{O}) \rightarrow S_{\textbf{k+2}}(\mathfrak{N}).$$ We want to study this map and its restriction  to $S_{\textbf{k}}^{new}(\mathcal{O})$. In \cite{MR4127854}, the author works with the case where $v_{\mathfrak{p}}(\mathfrak{N})$ is even whenever $\mathfrak{p}|\mathfrak{N}_{2}$. In the sequel, we aim to relax that condition. Let $\mathcal{O}_{\mathfrak{p}}=\mathcal{O}_{r}(K_{\mathfrak{p}})$. Let 
\[   
{E_{\mathfrak{p}}} \cong
     \begin{cases}
       K_{\mathfrak{p}} &\quad   \text{if q}  \equiv 3 \text{(mod 4)}, \\
    L_{\mathfrak{p}} &\quad   \text{if q}  \equiv 1 \text{(mod 4)}. \\ 
     \end{cases}
\]
Therefore from the previous section, whenever $dim(\pi_{\mathfrak{p}})>1$ we have
\[   
\dim \pi_{\mathfrak{p}}^{\mathcal{O}_{\mathfrak{p}}^{\times}} =
     \begin{cases}
       2 &\quad  \text{if } \pi_{\mathfrak{p}} \text{ induced from } {E_{\mathfrak{p}}} \\
    0 &\quad   \text{otherwise}.   \\ 
     \end{cases}
\]    
\begin{proposition} Let v$_{\mathfrak{p}}(\mathfrak{N}_{2})>1$, when $\mathfrak{p}|\mathfrak{N}_{2}$. 
    There exists a (non-canonical) homomorphism of $\mathcal{H}^{S}$ modules $$\textbf{JL}:S_{\mathbf{k}}^{new}(\mathcal{O}) \rightarrow S^{new}_{\mathbf{k+2}}(\mathfrak{N})$$
such that every $\mathfrak{D}$-primitive newform in $S^{new}_{\textbf{k+2}}(\mathfrak{N})$ satisfying
\begin{enumerate}
    \item the local representation $\pi_{\mathfrak{p}}$ is induced from $E_{\mathfrak{p}}$ if $v_{\mathfrak{p}}(\mathfrak{N}_{2})$ is odd, and
    \item the local representation $\pi_{\mathfrak{p}}$ is minimal if $v_{\mathfrak{p}}(\mathfrak{N}_{2})$ is even,
\end{enumerate} lies in the image. 
\begin{proof}
Let $\pi$ be a representation of $B^{\times}(\mathbb{A})$ which appears in the decomposition (15).
      By the global Jacquet--Langlands correspondence, $\pi$ corresponds to an irreducible automorphic representation $\pi'$ of $GL_{2}(\mathbb{A})$.
    For $\mathfrak{p}$ not dividing $\mathfrak{D}$, $B^{\times}_{\mathfrak{p}}\cong GL_{2}(F_{\mathfrak{p}}).$
When $\mathfrak{p}|\mathfrak{D}$, since $c(\pi_{\mathfrak{p}})>1$ and $\mathfrak{D}$-primitivity, $\pi'_{\mathfrak{p}}$ is supercuspidal. 
Such a representation occcurs if and only if $\pi_{\mathfrak{p}}^{\mathcal{O}_{\mathfrak{p}}^{\times}} \neq 0$ for $\mathfrak{p}|\mathfrak{D}$. Let $\mathfrak{p}|\mathfrak{n}_{2}$, with $v_{\mathfrak{p}}(\mathfrak{n}_{2})$ even. Combined with $\mathfrak{D}$-primitivity, this implies that $\pi_{\mathfrak{p}}$ is induced from $M_{\mathfrak{p}}$, the unramified extension of $F_{\mathfrak{p}}$. In this case, we have dim$(\pi_{\mathfrak{p}}^{\mathcal{O_{\mathfrak{p}}^{\times}}})=2$. 
Now let $\mathfrak{p}|\mathfrak{N}_{2}$ be such that $v_{\mathfrak{p}}(\mathfrak{N}_{2})$ is odd. This means that the corresponding local representation $\pi_{\mathfrak{p}}$ is induced from one of the ramified quadratic extension of $F_{\mathfrak{p}}$. We know that  $\pi_{\mathfrak{p}}^{\mathcal{O}_{\mathfrak{p}}^{\times}} \neq 0$ if and only if $\pi_{\mathfrak{p}}$ is induced from ${E_{\mathfrak{p}}}$.
Hence the result follows.
\end{proof}

\end{proposition}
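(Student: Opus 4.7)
The plan is to exploit the decomposition of $S^{new}_{\mathbf{k}}(\mathcal{O})$ furnished by Proposition 4.2 together with the global Jacquet--Langlands correspondence, and then identify the image by matching local conductors prime by prime. First, I would write
$$S^{new}_{\mathbf{k}}(\mathcal{O}) \cong \bigoplus_{\pi} \pi_{\mathbf{k}}^{\mathcal{O}^\times},$$
where $\pi$ ranges over irreducible automorphic representations of $B^\times(\mathbb{A})$ with trivial central character, $\dim\pi > 1$, and $c(\pi) = \mathfrak{N}$. For each such $\pi$ the global Jacquet--Langlands transfer produces an irreducible cuspidal automorphic representation $\pi'$ of $GL_2(\mathbb{A})$ with the same central character; we have $\pi'_{\mathfrak{p}}\cong\pi_{\mathfrak{p}}$ for $\mathfrak{p}\nmid\mathfrak{D}$, and $\pi'_\mathfrak{p}$ is discrete series (in fact supercuspidal, since $c(\pi_\mathfrak{p})>1$) for $\mathfrak{p}\mid\mathfrak{D}$.

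The next step would be to verify that the conductor of $\pi'$ equals $\mathfrak{N}$ whenever the corresponding summand $\pi_{\mathbf{k}}^{\mathcal{O}^\times}$ is nonzero, so that the $\pi'$-isotypic piece actually sits inside $S^{new}_{\mathbf{k+2}}(\mathfrak{N})$. At primes $\mathfrak{p}\nmid\mathfrak{D}$ this is Casselman's theory of the newvector. At primes $\mathfrak{p}\mid\mathfrak{D}$, I would invoke the entries of Table~1 together with the new Corollary~3.9: when $\mathfrak{p}\mid\mathfrak{N}_1$, nonvanishing of $\pi_\mathfrak{p}^{\mathcal{O}_\mathfrak{p}^\times}$ forces $\pi_\mathfrak{p}$ to have odd conductor equal to $v_\mathfrak{p}(\mathfrak{N})$; when $\mathfrak{p}\mid\mathfrak{N}_2$ with $v_\mathfrak{p}(\mathfrak{N}_2)$ even, $\mathfrak{D}$-primitivity plus nonvanishing force $\pi_\mathfrak{p}$ to be minimal, i.e.\ induced from the unramified quadratic extension, of conductor $v_\mathfrak{p}(\mathfrak{N})$; and when $v_\mathfrak{p}(\mathfrak{N}_2)$ is odd, Corollary~3.9 pins down $\pi_\mathfrak{p}$ as dihedrally induced from the specific ramified extension $E_\mathfrak{p}$, again with conductor matching $v_\mathfrak{p}(\mathfrak{N})$. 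Since the Jacquet--Langlands transfer preserves local conductors on the supercuspidal locus, this gives $c(\pi')=\mathfrak{N}$ in each case.

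Having identified which $\pi$ contribute, I would define $\textbf{JL}$ summand by summand: for each contributing $\pi$, choose a nonzero linear map from $\pi_{\mathbf{k}}^{\mathcal{O}^\times}$ onto the one-dimensional newform line $\mathbb{C}\cdot f_{\pi'}$ in $S^{new}_{\mathbf{k+2}}(\mathfrak{N})$. The choice is genuinely non-canonical whenever some local dimension $\dim \pi_\mathfrak{p}^{\mathcal{O}_\mathfrak{p}^\times}$ equals $2$, which by Corollary~3.9 occurs exactly at $\mathfrak{p}\mid\mathfrak{N}_2$. Summing over all contributing $\pi$ yields a global linear map which is automatically $\mathcal{H}^S$-equivariant, because distinct $\pi$ are separated by their Hecke eigenvalues outside $S$ via strong multiplicity one. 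By construction, every $\mathfrak{D}$-primitive newform in $S^{new}_{\mathbf{k+2}}(\mathfrak{N})$ whose local representations satisfy conditions (1) and (2) arises as $f_{\pi'}$ for some $\pi$ appearing in the decomposition, and hence lies in the image of $\textbf{JL}$.

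The main obstacle is the step at $\mathfrak{p}\mid\mathfrak{N}_2$ with $v_\mathfrak{p}(\mathfrak{N}_2)$ odd: one must identify precisely which of the two ramified quadratic extensions produces nonvanishing invariants, and this is the new content supplied by Corollary~3.9 (the case left unresolved in \cite{MR4127854}). Once that local classification is in hand and conductor-preservation under Jacquet--Langlands in the supercuspidal range is invoked, the remaining argument is essentially bookkeeping.
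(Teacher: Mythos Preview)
Your proposal is correct and follows essentially the same route as the paper: decompose $S^{new}_{\mathbf{k}}(\mathcal{O})$ via Proposition~4.2, apply the global Jacquet--Langlands correspondence, and check prime by prime---using Table~1 at $\mathfrak{p}\mid\mathfrak{N}_1$ and at even-valuation $\mathfrak{p}\mid\mathfrak{N}_2$, and Corollary~3.9 at odd-valuation $\mathfrak{p}\mid\mathfrak{N}_2$---that the contributing $\pi$ transfer to newforms of the correct level satisfying (1) and (2). You are somewhat more explicit than the paper in spelling out the non-canonical construction of $\textbf{JL}$ and invoking strong multiplicity one for Hecke equivariance, but the underlying argument is the same.
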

\begin{remark}
    Let us say $v_{\mathfrak{p}_{2}}(\mathfrak{N}_{2})=1$. Let $\pi_{\mathbf{k}}=\bigotimes \pi_{\mathfrak{p}}$ occuring in RHS of (9). Clearly  $\pi_{{\mathfrak{p}}_{2}}$ a character. When it is not the trivial character, its image under the Jacquet--Langlands correspondence is a non-minimal representation. 
    
    \end{remark}
Next we will describe the image of the above map explicitly. Analogous to the case of classical modular forms written in the introduction, we can associate a Hecke-module isomorphism between the space of Hilbert cusp forms of a fixed level $\mathbf{k}$ and level $\mathfrak{N}$ as follows to a suitable space of automorphic representations as follows:
$$S^{{new}}_{\mathbf{k}}(\mathfrak{N})\simeq \bigoplus \pi_{\mathbf{k}}^{K_{1}(\mathfrak{N})},$$ where $\pi_{\mathbf{k}}$ runs of irreducible cuspidal automorphic representations with trivial central character and $K_{1}(\mathfrak{N})$ denote a suitable compact-open subgroup of level $\mathfrak{N}$. We refer the reader to \cite{MR4127854} for a detailed discussion on this dictionary.
Following \cite{MR4127854},
let us define the subspace $S^{[\mathfrak{a;b;c;d}]}_{\textbf{k+2}}(\mathfrak{N}) \subset S^{new}_{\textbf{k+2}}(\mathfrak{N})$, where $\mathfrak{a,b,c,d}$ are coprime ideals as follows
\[ S^{[\mathfrak{a; b; c; d}]}_{\mathbf{k}}(\mathfrak{N}) :\simeq \bigoplus \pi^{K_{1}(\mathfrak{N})}, \]
where $\pi$ runs over all irreducible weight $\mathbf{k}$ cuspidal automorphic representations of $GL(2,\mathbb{A})$, with trivial central character such that
\begin{enumerate}
    \item $c(\pi_{\mathfrak{p}})=v_{\mathfrak{p}}(\mathfrak{N})$ for $\mathfrak{p}|\mathfrak{abcd}$.
    \item $\pi_{\mathfrak{p}}$ is discrete series for $\mathfrak{p}|\mathfrak{a}$.
    \item $\pi_{\mathfrak{p}}$ is a minimal supercuspidal if $\mathfrak{p}|\mathfrak{b}$, which is induced from $E_{\mathfrak{p}}$ when $v_{\mathfrak{p}}(\mathfrak{b})$ is odd.
    \item $\pi_{\mathfrak{p}}$ is special if $\mathfrak{p|c}$.
\end{enumerate}
We write $S^{[\mathfrak{a; b;o; d}]}_{\mathbf{k}}(\mathfrak{N})$ as $S^{[\mathfrak{a; b; d}]}_{\mathbf{k}}(\mathfrak{N})$.
\begin{theorem}
    As $\mathcal{H}^{S}$-modules we have an isomorphism
    $$S_{\mathbf{k}}^{new}(\mathcal{O})\cong 2^{\#\{\mathfrak{p|N}_{2}\}} S^{[\mathfrak{N}_{1}; \mathfrak{N^{\prime}}_{2}; \mathfrak{N}^{\prime\prime}_{2};\mathfrak{M}]}_{\mathbf{k+2}}(\mathfrak{N}),$$ where $\mathfrak{N}^{\prime\prime}_{2}$ is such that $v_{\mathfrak{p}}(\mathfrak{N}^{\prime\prime}_{2})=2$ for every prime ideal $\mathfrak{p}$ dividing $\mathfrak{N}_{2}^{\prime\prime}$, and $\mathfrak{N}_{2}^{\prime}.\mathfrak{N}_{2}^{\prime\prime}=\mathfrak{N}_{2}$, and $\mathfrak{N}_{2}^{\prime}$ and $\mathfrak{N}_{2}^{\prime\prime}$ are coprime.
\end{theorem}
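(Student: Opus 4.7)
The plan is to combine the abstract spectral decomposition of Proposition 4.4 with the Jacquet--Langlands correspondence and the local dimension computations (Table 1 and Corollary 3.8), then match the surviving local conditions at each prime to the definition of $S^{[\mathfrak{N}_{1};\mathfrak{N}_{2}';\mathfrak{N}_{2}'';\mathfrak{M}]}_{\mathbf{k+2}}(\mathfrak{N})$ on the $GL_{2}$ side.

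First I would invoke Proposition 4.4 to write
\[ S_{\mathbf{k}}^{new}(\mathcal{O}) \cong \bigoplus_{\pi} \pi_{\mathbf{k}}^{\mathcal{O}^{\times}}, \]
where $\pi$ runs over irreducible automorphic representations of $B^{\times}(\mathbb{A})$ with trivial central character, $\dim \pi > 1$, and global conductor $c(\pi) = \mathfrak{N}$. Since $\mathcal{O}^{\times} = \prod_{\mathfrak{p}} \mathcal{O}_{\mathfrak{p}}^{\times}$ and $\pi$ factors as a restricted tensor product, we obtain the local decomposition $\dim \pi^{\mathcal{O}^{\times}} = \prod_{\mathfrak{p}} \dim \pi_{\mathfrak{p}}^{\mathcal{O}_{\mathfrak{p}}^{\times}}$, reducing the problem to prime-by-prime dimension computations together with the corresponding local conditions on $\pi_{\mathfrak{p}}$.

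Next I would determine the local factors: at $\mathfrak{p} \nmid \mathfrak{D}$ Casselman's theory on the maximal order in $GL_{2}(F_{\mathfrak{p}})$ yields dimension $1$ on the new line; at $\mathfrak{p} \mid \mathfrak{N}_{1}$ the unramified special order together with the odd valuation $v_{\mathfrak{p}}(\mathfrak{N})$ gives dimension $1$ by Table 1, and Jacquet--Langlands produces a discrete series $\pi'_{\mathfrak{p}}$ on $GL_{2}$; at $\mathfrak{p} \mid \mathfrak{N}_{2}$ with $v_{\mathfrak{p}}(\mathfrak{N}_{2})$ odd, Corollary 3.8 gives dimension $2$ exactly when $\pi_{\mathfrak{p}}$ is dihedrally induced from $E_{\mathfrak{p}}$ and $0$ otherwise; at $\mathfrak{p} \mid \mathfrak{N}_{2}$ with $v_{\mathfrak{p}}(\mathfrak{N}_{2})$ even, the even-conductor minimal row of Table 1 (with $e(E/F) = 2$) gives dimension $2$ for minimal representations and $0$ for non-minimal ones. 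Under Jacquet--Langlands at every $\mathfrak{p} \mid \mathfrak{D}$, these translate precisely into the discrete series / minimal-supercuspidal-with-prescribed-induction / special local conditions defining $S^{[\mathfrak{N}_{1};\mathfrak{N}_{2}';\mathfrak{N}_{2}'';\mathfrak{M}]}_{\mathbf{k+2}}(\mathfrak{N})$, with $\mathfrak{N}_{2}'$ collecting primes of odd valuation in $\mathfrak{N}_{2}$ and $\mathfrak{N}_{2}''$ collecting those of even valuation. Multiplying the local dimensions over $\mathfrak{p} \mid \mathfrak{N}_{2}$ produces the global multiplicity $2^{\#\{\mathfrak{p} \mid \mathfrak{N}_{2}\}}$.

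Finally, I would verify $\mathcal{H}^{S}$-equivariance: the unramified Hecke algebra $\mathcal{H}^{S}$ acts only at primes outside a finite set containing $\mathfrak{D}$, and at such places both $\pi_{\mathfrak{p}}^{\mathcal{O}_{\mathfrak{p}}^{\times}}$ and the corresponding $GL_{2}$ new line are acted on through the same Satake parameters, preserved by JL at unramified places, so any identification of the local invariant spaces at primes dividing $\mathfrak{N}_{2}$ lifts to an $\mathcal{H}^{S}$-module isomorphism. The main obstacle is precisely this identification: at each $\mathfrak{p} \mid \mathfrak{N}_{2}$ the space $\pi_{\mathfrak{p}}^{\mathcal{O}_{\mathfrak{p}}^{\times}}$ is $2$-dimensional while the corresponding $GL_{2}$ new line is $1$-dimensional, so the isomorphism has to be explicitly non-canonical, and it is the need to choose a splitting at each such prime that produces the multiplicative factor $2^{\#\{\mathfrak{p} \mid \mathfrak{N}_{2}\}}$ rather than a canonical identification. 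Careful bookkeeping between the minimality types on $B^{\times}$ and the discrete-series subtypes (supercuspidal induced from $E_{\mathfrak{p}}$ versus special) on $GL_{2}$ via local JL is the only delicate point.
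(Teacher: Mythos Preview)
Your overall strategy --- decompose via Proposition~4.2, compute local invariant dimensions prime by prime, and transfer to $GL_2$ via Jacquet--Langlands --- is exactly what the paper does. However, your local case analysis at primes $\mathfrak{p}\mid\mathfrak{N}_2$ with even valuation has a genuine gap: you only distinguish ``minimal $\Rightarrow$ dimension $2$'' from ``non-minimal $\Rightarrow$ dimension $0$'', but the non-minimal row of Table~1 carries the hypothesis $\dim\pi_{\mathfrak{p}}>1$. You have omitted the possibility that $\pi_{\mathfrak{p}}$ is a \emph{one-dimensional} character $\chi\circ N_{B/F}$ of $B_{\mathfrak{p}}^{\times}$. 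For such $\pi_{\mathfrak{p}}$ the first row of Table~1 applies, giving $\dim\pi_{\mathfrak{p}}^{\mathfrak{o}_{E_{\mathfrak{p}}}^{\times}}\in\{0,1\}$, and under local Jacquet--Langlands these correspond precisely to \emph{special} representations (ramified twists of Steinberg) on $GL_2$. The paper handles this explicitly: ``If $\pi_{\mathfrak{p}}$ is non-minimal, then $\pi_{\mathfrak{p}}^{\mathcal{O}_{\mathfrak{p}}^{\times}}=0$, unless $\dim(\pi_{\mathfrak{p}})=1$. So such representation contributes to the image of $JL$ only if $\pi'_{\mathfrak{p}}$ is trivial or a ramified twist of the Steinberg.'' Without this case you have no source for the ``special'' condition attached to $\mathfrak{N}_2''$ in the target space.

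This omission is tied to your misreading of the decomposition $\mathfrak{N}_2=\mathfrak{N}_2'\mathfrak{N}_2''$. You describe $\mathfrak{N}_2''$ as collecting all primes of even valuation, but the statement requires $v_{\mathfrak{p}}(\mathfrak{N}_2'')=2$ \emph{exactly}; primes with $v_{\mathfrak{p}}(\mathfrak{N}_2)\geq 4$ even go into $\mathfrak{N}_2'$ (the minimal-supercuspidal slot). The reason is that a one-dimensional $\pi_{\mathfrak{p}}=\chi\circ N_{B/F}$ has nonzero $\mathfrak{o}_{E_{\mathfrak{p}}}^{\times}$-invariants only when $\chi$ is trivial on $N_{E_{\mathfrak{p}}/F_{\mathfrak{p}}}(\mathfrak{o}_{E_{\mathfrak{p}}}^{\times})=(\mathfrak{o}_{F_{\mathfrak{p}}}^{\times})^2$ (Lemma~3.1), forcing $c(\chi)\leq 1$ and hence $c(\pi_{\mathfrak{p}})\leq 2$; so special representations can only arise at primes with $v_{\mathfrak{p}}(\mathfrak{N}_2)=2$. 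Once you add this case, the bookkeeping matches the paper's.
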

\begin{proof}
    Let $\pi$ be a representation appearing in LHS and let $\pi'$ be its Jacquet-Langlands transfer to $GL_{2}(\mathbb{A})$. Now we know if $\mathfrak{p|N_{1}}$, $v_{\mathfrak{p}}(\mathfrak{N}_{1})$ has to be odd. This means that $c(\pi_{\mathfrak{p}})$ is odd and hence by results above  $\pi_{\mathfrak{p}}^{\mathcal{O}_{\mathfrak{p}}^{\times}}$  is one-dimensional. Now if $v_{\mathfrak{p}}(\mathfrak{N_{2}})$ is even then either $\pi^{\prime}_{\mathfrak{p}}$ is non-minimal or it is a minimal supercuspidal. If $\pi_{\mathfrak{p}}$ is non-minimal, then  $\pi_{\mathfrak{p}}^{\mathcal{O}_{\mathfrak{p}}^{\times}} = 0$, unless $\dim (\pi_{\mathfrak{p}})=1$. So such representation contributes to the image of $JL$ only if $\pi_{\mathfrak{p}}^{\prime}$ is trivial or a  ramified twist of the Steinberg. If $\pi_{\mathfrak{p}}$ is minimal, then  $\pi_{\mathfrak{p}}^{\mathcal{O}_{\mathfrak{p}}^{\times}}$ is $2$-dimensional. Now  $\pi_{\mathfrak{p}}$ has an odd conductor, hence minimal. By above, we need to have $\pi_{\mathfrak{p}}$ to be induced from $E_{\mathfrak{p}}$, in order to have $\pi_{\mathfrak{p}}^{\mathcal{O^{\times}_{\mathfrak{p}}}}\neq0$, and these are $2$-dimensional, whenever non-zero. Hence the theorem follows.  
\end{proof}
In \cite{MR4127854}, the author gave a description of $S_{\textbf{k}}(\mathcal{O})$, when $\mathfrak{N}_{2}$ was cube-free. We aim to relax this assumption. Write $lev(\mathcal{O})=\mathfrak{N}_{1}\mathfrak{N}_{2}\mathfrak{M}$ as above
\begin{theorem}
    $$S_{\mathbf{k}}(\mathcal{O})\cong \bigoplus 2^{\{\mathfrak{p|{b}}\}}S_{\mathbf{k+2}}^{[{\mathfrak{a;b;c;M}}]}(\mathfrak{abcM})$$ where \begin{enumerate}
        \item $\mathfrak{D|abc}$,
        \item $\mathfrak{a|N}_{1}$ and $v_{\mathfrak{p}}(\mathfrak{N}_{1})$ is odd,
        \item $\mathfrak{bc|N}_{2}$, and
        \item for each $\mathfrak{p|c}$ and $v_{\mathfrak{p}}(\mathfrak{c})=1$ or $2$.
    \end{enumerate}
\end{theorem}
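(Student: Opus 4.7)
The plan is to begin with Proposition 4.2, which gives
\[
S_{\mathbf{k}}(\mathcal{O}) \cong \bigoplus_{\dim \pi > 1} \pi_{\mathbf{k}}^{\mathcal{O}^{\times}},
\]
with $\pi$ running over irreducible automorphic representations of $B^{\times}(\mathbb{A})$ with trivial central character. Since $\mathcal{O}^{\times} = \prod_{\mathfrak{p}} \mathcal{O}_{\mathfrak{p}}^{\times}$, I would factor $\pi_{\mathbf{k}}^{\mathcal{O}^{\times}} = \pi_{\infty}^{\mathbf{k}} \otimes \bigotimes_{\mathfrak{p}} \pi_{\mathfrak{p}}^{\mathcal{O}_{\mathfrak{p}}^{\times}}$ and then use the global Jacquet--Langlands correspondence to transfer $\pi$ to an automorphic representation $\pi^{\prime}$ of $GL_{2}(\mathbb{A})$. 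The right-hand side of the theorem is obtained by reorganising the sum on the $B^{\times}$ side according to the local type of $\pi^{\prime}_{\mathfrak{p}}$ at each prime dividing $\mathfrak{D}$.

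The core work is a prime-by-prime dimension analysis using Section~3 and Table~1. At $\mathfrak{p} \nmid \mathfrak{D}$ this is standard Casselman theory for $GL_{2}(F_{\mathfrak{p}})$ and determines the $\mathfrak{M}$ factor. At $\mathfrak{p} \mid \mathfrak{N}_{1}$ the local order is unramified, $v_{\mathfrak{p}}(\mathfrak{N}_{1})$ must be odd, and Table~1 forces the contributing $\pi_{\mathfrak{p}}$ to be a supercuspidal of odd conductor at most $v_{\mathfrak{p}}(\mathfrak{N}_{1})$, giving a one-dimensional invariant space that furnishes the $\mathfrak{a}$ factor. At $\mathfrak{p} \mid \mathfrak{N}_{2}$ the order is $\mathcal{O}_{r}(K_{\mathfrak{p}})$ for a ramified $K_{\mathfrak{p}}$, and combining Table~1 with Corollary~3.7 yields three subcases: (i) minimal supercuspidals induced from $E_{\mathfrak{p}}$ (both odd- and even-conductor cases) contribute a two-dimensional invariant space and form the $\mathfrak{b}$ factor, explaining the multiplicity $2^{\#\{\mathfrak{p} \mid \mathfrak{b}\}}$; (ii) one-dimensional representations $\mu \circ N_{B/F}$ with $\mu$ trivial on $N_{K_{\mathfrak{p}}/F_{\mathfrak{p}}}(\mathfrak{o}_{K_{\mathfrak{p}}}^{\times})$ contribute a one-dimensional space whose JL transfer is a Steinberg or a ramified twist of Steinberg; (iii) non-minimal supercuspidals give zero by Table~1.

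For case (ii), Lemma~3.1 identifies $N_{K_{\mathfrak{p}}/F_{\mathfrak{p}}}(\mathfrak{o}_{K_{\mathfrak{p}}}^{\times})$ with $(\mathfrak{o}_{F_{\mathfrak{p}}}^{\times})^{2}$, so together with the trivial central character condition $\mu^{2} = 1$ only two admissible $\mu$ survive, producing $GL_{2}$ representations of conductor exponent $1$ and $2$ respectively. This is precisely the origin of the constraint $v_{\mathfrak{p}}(\mathfrak{c}) \in \{1, 2\}$ and determines the $\mathfrak{c}$ factor. Packaging all local contributions, each admissible global decomposition $\mathfrak{N} = \mathfrak{a} \mathfrak{b} \mathfrak{c} \mathfrak{M}$ satisfying (1)--(4) encodes a family of local types at primes dividing $\mathfrak{D}$, and summing over all $\pi^{\prime}$ of this prescribed local type recovers exactly $S^{[\mathfrak{a;b;c;M}]}_{\mathbf{k+2}}(\mathfrak{abcM})$ by definition.

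The main obstacle will be the character case at $\mathfrak{p} \mid \mathfrak{N}_{2}$: one must verify that exactly two admissible $\mu$ exist, that their Jacquet--Langlands transfers are genuinely the special representations enumerated in the definition of $S^{[\mathfrak{a;b;c;M}]}_{\mathbf{k+2}}(\cdot)$, and that neither $v_{\mathfrak{p}}(\mathfrak{c}) = 0$ nor $v_{\mathfrak{p}}(\mathfrak{c}) > 2$ produces additional or duplicated contributions. Once this subtlety is resolved, the remainder is a routine reindexing of the direct sum via the local-global dictionary furnished by Propositions~4.2 and~4.4.
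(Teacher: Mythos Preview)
Your plan is valid but takes a different route from the paper. The paper's proof does not start from the automorphic decomposition of Proposition~4.1 and carry out a prime-by-prime local analysis; instead it invokes the old/new decomposition of Proposition~4.2(2) to write $S_{\mathbf{k}}(\mathcal{O})\cong\bigoplus S_{\mathbf{k}}^{new}(\mathcal{O}')$ over special orders $\mathcal{O}'\supseteq\mathcal{O}$, and then applies the already-established Theorem~4.4 to each summand. That packages all the local casework into the earlier theorem and reduces the present proof to two lines of bookkeeping. Your direct approach is more self-contained but essentially reproves Theorem~4.4 in passing, and it forces you to track the full filtration $\mathfrak{o}_{K_{\mathfrak{p}}}^{\times}\mathcal{U}^{r-1}$ (not just $\mathfrak{o}_{K_{\mathfrak{p}}}^{\times}$) when determining which conductors contribute at each $\mathfrak{p}\mid\mathfrak{N}_{2}$.

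One point in your case (ii) needs correction. The conditions $\mu^{2}=1$ and $\mu$ trivial on $(\mathfrak{o}_{F_{\mathfrak{p}}}^{\times})^{2}$ are satisfied by \emph{four} characters of $F_{\mathfrak{p}}^{\times}$, not two: the trivial character, the unramified quadratic character $\nu$, and the two ramified quadratic characters $\lambda_{1},\lambda_{2}$. For $r\geq 2$ all four yield one-dimensional $\pi_{\mathfrak{p}}=\mu\circ N_{B/F}$ with $\pi_{\mathfrak{p}}^{\mathcal{O}_{r}(K_{\mathfrak{p}})^{\times}}\neq 0$. Their Jacquet--Langlands transfers are the four special representations of $PGL_{2}(F_{\mathfrak{p}})$: two of conductor $\mathfrak{p}$ (Steinberg and its unramified twist) and two of conductor $\mathfrak{p}^{2}$ (the two ramified quadratic twists of Steinberg). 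This does not break the final statement, since for each value $v_{\mathfrak{p}}(\mathfrak{c})\in\{1,2\}$ the space $S^{[\mathfrak{a;b;c;M}]}_{\mathbf{k+2}}(\mathfrak{abcM})$ already counts both special representations of that conductor; but the argument you sketch would miscount by a factor of two at each such prime if carried through literally.
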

\begin{proof}
We know from $(17)$ that 
\begin{equation}
S_{\textbf{k}}(\mathcal{O})\cong \bigoplus S_{\textbf{k}}^{\mathfrak{ad-}new}(\mathcal{O^\prime (\mathfrak{adM}})),
\end{equation} where $\mathfrak{a}$ runs over ideals satisfying $(2)$, and $\mathfrak{d}$ runs over ideals such that $\mathfrak{D
|ad}$, and $\mathcal{O'(\mathfrak{adM}}))$ is a special order of level $\mathfrak{adM}$. Now fixing $\mathfrak{a}$ and $\mathfrak{d}$, we can write each space on RHS of $(19)$ as \begin{equation}\bigoplus S_{\textbf{k+2}}^{[\mathbf{\mathfrak{a;b;c;o}}]}(\mathfrak{abcM}),
\end{equation}
where $\mathfrak{b,c}$ runs over coprime divisors of $\mathfrak{d}$ such that $\mathfrak{bc=d}$, such that $\pi_{\mathfrak{p}}$ is higher dimensional for $\mathfrak{p|b}$ and one-dimensional for $\mathfrak{p|c}$. We know that whenever $\mathfrak{p|b}$, then $\dim \pi_{\mathfrak{p}}^{\mathcal{O^{\times}_{\mathfrak{p}}}}=2$. From Theorem 4.4, we get the result. 
\end{proof}
\printbibliography
\end{document}